\def\MultEll{$\textit{MultEll}$}
\def\tMultEll{$\widetilde{\textit{MultEll}}$}
\newtheorem{theorem}{Theorem}
\newtheorem{lemma}[theorem]{Lemma}
\newtheorem{cor}[theorem]{Corollary}
\newtheorem{definition}[theorem]{Definition}
\newtheorem{algor}[theorem]{Algorithm}
\numberwithin{equation}{section}
\numberwithin{theorem}{section}
\numberwithin{table}{section}
\numberwithin{figure}{section}
\newtheorem{rem}[theorem]{Remark}
\newcommand{\NN}{\mathbb{N}}
\newcommand{\ZZ}{\mathbb{Z}}
\renewcommand{\mod}{\,\mathop{\rm mod}}
\newcommand{\Z}{\mathbb{Z}}
\newcommand{\F}{\mathbb{F}}
\newcommand{\Q}{\mathbb{Q}}
\def\gcd{\operatorname{gcd}}
\def\vp{\operatorname{\psi}}
\def\mand{\qquad \text{and} \qquad}
\def\cE{{\mathcal E}}
\def\cI{{\mathcal I}}
\def\cN{{\mathcal N}}
\def\cP{{\mathcal P}}
\def\cR{{\mathcal R}}
\def\0{{\mathbf{0}}}
\def\({\left(}
\def\){\right)}
\def\fl#1{\left\lfloor#1\right\rfloor}
\begin{document}

\title{On oracle factoring of integers} 

\author[A. D\k{a}browski]{Andrzej D\k{a}browski}

\address{
Institute of Mathematics, University of Szczecin, Wielkopolska 15, 
70-451 Szczecin, Poland}
\email{andrzej.dabrowski@usz.edu.pl}
\email{dabrowskiandrzej7@gmail.com} 

\author[J. Pomyka\l a]{Jacek Pomyka\l a}

\address{Institute of Mathematics, Warsaw University, Banacha 2, 02--097 Warsaw, Poland}
\email{pomykala@mimuw.edu.pl}

\author[I. E. Shparlinski] {Igor E. Shparlinski}

\address{Department of Pure Mathematics, University of New South Wales,
Sydney, NSW 2052, Australia}
\email{igor.shparlinski@unsw.edu.au}

\date{}

\begin{abstract}
We present an oracle factorisation algorithm, which in polynomial deterministic time, finds a nontrivial factor of almost all 
positive integers $n$ based on the knowledge of the number of points on
certain elliptic curves in residue rings modulo $n$.
\end{abstract}

\keywords{integer factorisation problem, oracle}
\subjclass[2020]{11G05, 11Y05}

\maketitle{}

\section{Introduction}

\subsection{Motivation and background} 
It is widely believed that the integer factorisation problem, that is,  the problem of finding a non-trivial factor of 
a positive integer $n$ is a difficult computational question see~\cite[Section~5.5]{CrPom}. 
The best known rigorously proved deterministic 
algorithm is due to Harvey~\cite{Ha} with a slight improvement in~\cite{HaHi}, and runs in time $n^{1/5+o(1)}$, which builds on the $n^{2/9+o(1)}$-algorithm
of   Hittmeir~\cite {Hitt}, which, in turn, improves the previous $n^{1/4+o(1)}$-algorithm of Pollard~\cite{Pol}.
In fact, prior~\cite{Ha},  an $n^{1/5+o(1)}$-algorithm has also been known, however only conditional on  the Generalised Riemann 
Hypothesis (GRH), see~\cite[Section~5.5]{CrPom}.  
There are also fast probabilistic algorithms, some of which, such as the number field sieve,
remain heuristic, we refer to~\cite[Chapter~6]{CrPom} for an outline of these algorithms,
see also~\cite{LeeVen} for some recent progress towards a rigorous version of the number field sieve. 

It is also known from the work of Miller~\cite{Mil} that under the assumption of the GRH, computing the Euler function $\varphi(n)$ is deterministic polynomial time equivalent to 
computing a non-trivial factor of an integer $n$ or prove that it is prime. 

Shoup~\cite[Section~10.4]{Shoup} has given an unconditional version of this reduction, however his 
algorithm is probabilistic. 

These results of Miller~\cite{Mil} and Shoup~\cite[Section~10.4]{Shoup} have a natural interpretation
of {\it oracle factoring\/}. Namely, given an oracle, which for every integer $n\ge 1$ outputs $\varphi(n)$ 
we can factor $n$ in polynomial time. We also recall, several other oracle factoring algorithms, such as 
\begin{itemize}
\item a heuristic algorithm of Maurer~\cite{Maur} requiring certain $\varepsilon \log n$ oracle calls, which is based on the {\it elliptic curve factoring\/} algorithm
of Lenstra~\cite{Len2,Len1},
\item a rigorous algorithm of Coppersmith~\cite{Copp1,Copp2} requiring certain about $ 0.25 \log n$ oracle calls, which is based on an algorithm to find small solutions to polynomial congruences,
\item a rigorous algorithm of  Pomyka\l a and Radziejewski~\cite{PR}, with the oracle being the multiple $D$ of a value of the  Euler totient function $\varphi(n)$ with $D\le \exp\((\log n)^{O(1)}\)$, 
\end{itemize}
see also~\cite{LPZHL,LZL,MayRit,NA} and references therein. 

Sutherland~\cite[Chapter~2.3]{Suth} has designed a probabilistic factoring algorithm which uses an oracle that returns a multiple of  the multiplicative order of integers modulo $N$. 
Bach~\cite{Bach} established a similar result for a discrete logarithm oracle.

Here, we design an elliptic curve version of the results of Miller~\cite{Mil} and Shoup~\cite[Section~10.4]{Shoup}; 
we refer to~\cite{kn,Silv} for an appropriate background on elliptic curves. The  probabilistic approach for the oracle answering the order of points of elliptic curves modulo $n$  was investigated in \cite{MMV} with the related error probability $184/225$ (improved to $53/80$  in \cite{DP}).

We remark that the Euler function gives the number of elements of the unit group 
$\Z_n^*$ of the residue ring $\Z_n$ modulo $n$. Our elliptic curve analogue of this oracle 
returns the number of points on certain elliptic curves reduced modulo $n$. 
Furthermore, as in the case of the algorithm of Shoup~\cite[Section~10.4]{Shoup}, 
we only request a 
small multiple of these numbers rather than their exact values, see Definitions~\ref{def:MultEll} 
and~\ref{def:MultEll-t} below. 
Recently a related scenario (for a slightly 
different family of elliptic curves) has also been 
considered by Dieulefait and Urroz~\cite{DiUr}. However, here we do not assume that our oracle
has an access to integers $b$ with 
$$
\(\frac{b}{n}\) = -1
$$ 
for the Jacobi symbol. Instead we estimate the proportion of integers $n \le N$ for which this smallest value of $b$ is large.

\subsection{Notation and conventions} \label{sec:not}
Throughout the paper, the 
notations 
$$U = O(V), \qquad V = \Omega(U), \qquad U \ll V, \qquad V \gg U
$$ 
are all equivalent to the
statement that the inequality $|U| \le c V$ holds with some 
constant $c> 0$, which may occasionally, where obvious depend on
the real parameters $\gamma$, $\varepsilon$ and $\delta$.

As we have mentioned, for a positive integer $n$, we use $\Z_n$ to denote the residue ring modulo $n$ and we use 
$\Z_n^*$ to denote the group of units of $\Z_n$. 
For a prime $p$ we denote by $\F_p$ the finite field of $p$ elements.

For a square-free $n$ we use $N_n$ to denote the least positive integer 
$b$ with $\gcd(b,n) = 1$ and such that for the Jacobi symbol we have 
$$
\(\frac{b}{n}\) = -1.
$$ 
The oracles have to treat in fact only the square-free values of $b\le B$, since 
obviously $N_n$ is a  prime number. 

The letters $p$ and $q$ with or without indices, always denote prime numbers. 

We measure the time of our algorithms in the number of bit operations. However we also use
naive arithmetic algorithms and the recent striking progress of  D. Harvey and  
J. van der Hoeven~\cite {HavdH}  does not affect our finally results (as the possible 
advantage gets absorbed in $o(1)$ in the powers of $\log N$).

\subsection{Background on elliptic curves in residue rings}

For an elliptic curve $E$ over $\mathbb Q$ given by a minimal 
Weierstrass equation with integer coefficients see~\cite[Chapter~III, Section~1]{Silv}, 
we denote by 
$E(\mathbb Z_n)$ the set of solutions of the corresponding cubic congruence modulo $n$. 

Let us recall basic facts on elliptic curves defined over $\Z_n $ (see~\cite{Len2, Len1})
which is needed in this paper.
Assume that 
$$
n=\prod_{i=1}^s p_i
$$
is  a square-free integer with primes $p_i>3$. 
The projective plane $\mathbb P^2(\Z_n)$ is defined to be the set of equivalence classes of primitive triples in $\Z_n^3$ (that is, triples $(x_1,x_2,x_3)$ with $\gcd(x_1,x_2,x_3,n)=1$) with respect to the equivalence $(x_1, x_2,x_3)\sim(y_1, y_2,y_3)$ if $(x_1, x_2,x_3) = u(y_1, y_2,y_3)$ for a unit $u\in \Z^*_n$. An elliptic curve over $\Z_n$ is given by the short Weierstrass equation $E: y^2z = x^3 + axz^2 +bz^3$, where $a,b\in \Z_n$ and
the discriminant $-16(4a^3 + 27b^2)\in \Z_n^*$. 
Let $E(\F_{p_i})$ be the group of $\F_{p_i}$-rational points on 
the reduction of $E$ modulo $p_i$ for primes $p_i\mid n$. For the set $E(\Z_n)$ of points in $\mathbb P^2(\Z_n)$ satisfying the equation of $E$, by the Chinese remainder theorem
there exists a bijection 
\begin{equation}\label{phi1} \vp: E(\Z_n) \to E(\F_{p_1}) \times \ldots \times E(\F_{p_s})
\end{equation} 
induced by the reductions modulo $p_i$, $i =1, \ldots, s$. 

The points $(x:y:z)\in E(\Z_n)$ with $z\in \Z_n^*$ can be written as $(x/z: y/z:1)$ and are called finite points. The set $E(\Z_n)$ is a group with the 
addition for which $\vp$ is a group isomorphism, which in general can be defined using the so-called complete set of addition laws on $E$ 
(see \cite{Len2}). To add two finite points $P,Q\in E(\Z_n)$ with
$$
\vp(P) = (P_1,\ldots,P_s),\, \vp(Q) = (Q_1,\ldots,Q_s)\in E(\F_{p_1}) \times \ldots \times E(\F_{p_s})
$$
we can also use the same 
formulas as for elliptic curves over fields in the following two cases: 
\begin{itemize}
\item either $Q_i\neq \pm P_i$ for each $i=1, \ldots, s$;
\item or $Q_i = P_i$ and $Q_i\neq -P_i$ for each $i=1, \ldots, s$. 
\end{itemize}
In these cases, 
\begin{equation}\label{addel}
\begin{cases} x_{P+Q} = \lambda^2 - x_P - x_Q,\\
y_{P+Q} = \lambda(x_P - x_{P+Q}) - y_P,
\end{cases}
\end{equation} 
where 
$$
\lambda = \begin{cases}\displaystyle{ \frac{y_Q- y_P}{x_Q-x_P} }& \text{ if } Q_i \neq \pm P_i 
\text{ for } i=1, \ldots, s, \\ 
\displaystyle{ \frac{3x_P^2 +a}{2y_P} }& \text{ if } Q_i = P_i \text{ and } Q_i \neq -P_i
\text{ for } i=1, \ldots, s. 
\end{cases}
$$

In the remaining cases, these formulae will give not correct 
answer for the sum P+Q, because in these cases denominators 
in $\lambda$ will be zero divisors in $\mathbb Z_n$ (see~\cite[Section~3]{Len2}, for further discussion how to remedy this situation). 

Given a positive, square-free integer $n$, an odd prime divisor $r\mid n$ and 
$P=(a,b) \in E( \ZZ_n)$
we define its reduction $P \mod r \in E( \ZZ_r)$ modulo $r$ by 
$P \mod r = (a \mod r, b \mod r)$.

\subsection{Main results}
First we describe a \MultEll $(N,B,M)$-oracle which we assume is available to us.
Let 
$$
E_b:~y^2=x(x^2 - b)
$$ be an elliptic curve over $\Q$.

By the Chinese remainder theorem we see that for  a square-free $n$, the number of solutions $E(n,b)$ to the congruence 
$$
y^2\equiv x(x^2 - b)\mod n, \qquad (x,y) \in \Z_n^2,
$$
is given by  
\begin{equation}\label{eq:E(n,b)}
E(n,b)=\prod_{p\mid n}\#\(E_b(\F_p)\), 
\end{equation} 
provided that $\gcd(b,n)=1$, 
see also~\eqref{phi1}.

We recall that $N_n$ is prime, thus  the oracles below need to treat only 
square-free numbers. 

\begin{definition}[\MultEll$(N,B,M)$-oracle] 
\label{def:MultEll}
Given the parameters $B$,  $N$ and $M$,  for each  integer  $n \le N$ it returns
\begin{itemize}
\item   a positive multiple  
$$k_b E(n,b) \le M, \qquad k_b\in \NN,
$$ 
of $E(n,b)$ given by~\eqref{eq:E(n,b)}
for every $b\le B$ with $\gcd(b,n)=1$, if $n$ is square-free;
\item
an error message $\star\star\star$, if $n$ is not  square-free. 
\end{itemize}
\end{definition}

We are now ready to formulate our main result.

\begin{theorem}\label{thm:factoring} 
Let 
$$
2< \gamma <2+ \frac{\sqrt {257} -15}{16} \mand \delta<1/4
$$ 
be fixed. Assume that for a sufficiently large integer $N$ we are given a \MultEll $(N,B,M)$-oracle
where 
$$
B = \fl{(\log N)^\gamma} \mand M=N^{O(1)} . 
$$
Then there is a deterministic algorithm that finds a nontrivial factor of all   integers 
$n\le N$ with at most $N/(\log N)^{\delta}$
exceptions, in polynomial, deterministic time $O\((\log N)^{\rho(\gamma) +o(1)}\)$, 
where 
$$
\rho(\gamma) = \max\left\{\gamma +2, \frac{9\gamma-17}{8(\gamma-2)}\right\}
$$
and $O \left((\log N)^{\gamma }\right)$ oracle queries.
\end{theorem}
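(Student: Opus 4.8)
The plan is to use the oracle to produce, for suitable values of $b$, an integer that is a multiple of $\#E_b(\F_p)$ for every $p \mid n$, and then to exploit the multiplicativity of the point-counting together with the Chinese remainder structure of $E_b(\Z_n)$ to split $n$. The key classical fact is that for the curve $E_b : y^2 = x(x^2-b)$ one has $p \mid \#E_b(\F_p)$ exactly when $\left(\frac{b}{p}\right)$ behaves in a controlled way (the torsion point $(0,0)$ of order $2$ forces $4 \mid \#E_b(\F_p)$, and over $\F_p$ the group order is $p+1-a_p$ with extra $2$-power structure governed by whether $b$ is a quadratic residue modulo $p$). Concretely, if $n = p_1 \cdots p_s$ is squarefree and $b$ is a quadratic nonresidue modulo some $p_i$ but a residue modulo some $p_j$, then the $2$-adic valuations of $\#E_b(\F_{p_i})$ and $\#E_b(\F_{p_j})$ differ, and this discrepancy is exactly what one converts into a nontrivial factorisation: one picks a point $P \in E_b(\Z_n)$ whose image in each $E_b(\F_{p_\ell})$ has a prescribed order structure, multiplies it by $L/2$ where $L$ is the oracle-supplied multiple $k_b E(n,b)$, and computes $\gcd$ of a coordinate denominator (a zero divisor) with $n$. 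This is the elliptic-curve analogue of the Miller–Shoup trick of using $\varphi(n)/2^j$ to extract square roots of $1$.

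The first step is to reduce, via the oracle, from the global multiple $k_b E(n,b)$ to working $2$-adically: set $L = k_b E(n,b) \le M = N^{O(1)}$, write $L = 2^e u$ with $u$ odd, and repeatedly halve the exponent while multiplying a random-looking but explicitly constructed point $P \in E_b(\Z_n)$ by $u, 2u, 4u, \ldots$; at the step where the behaviour modulo distinct $p_i$ diverges, one of the addition formulas in~\eqref{addel} fails because a $\lambda$-denominator is a zero divisor, and $\gcd$ with $n$ yields a proper factor. For this to work we need, for almost all $n \le N$, the existence of a small $b \le B$ with $\gcd(b,n)=1$ realising the required split of the primes of $n$ by the quadratic residue condition; this is where the quantity $N_n$ from Section~\ref{sec:not} enters. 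The count of exceptional $n$ — those for which every $b \le B = \fl{(\log N)^\gamma}$ fails, i.e.\ those with $N_n > B$ or with all $p_i$ lying in the same residue class pattern — is bounded by $N/(\log N)^\delta$ for $\delta < 1/4$ using standard sieve/character-sum estimates on the distribution of least quadratic nonresidues and on squarefree integers all of whose prime factors satisfy a fixed splitting condition; the constraint $\gamma < 2 + \tfrac{\sqrt{257}-15}{16}$ is precisely the threshold making the main error term beat $(\log N)^{-\delta}$ with $\delta < 1/4$.

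The running-time bookkeeping is then routine: each oracle query returns a number of size $N^{O(1)}$, so arithmetic on it and on points of $E_b(\Z_n)$ costs $(\log N)^{1+o(1)}$ per operation; we make $O\!\left((\log N)^\gamma\right)$ queries (one per candidate $b \le B$), and for each we run a scalar multiplication by an integer of size $M = N^{O(1)}$, i.e.\ $O(\log N)$ doublings, giving the term $(\log N)^{\gamma+2+o(1)}$ in $\rho(\gamma)$. The second term $\tfrac{9\gamma-17}{8(\gamma-2)}$ in $\rho(\gamma)$ comes from the alternative strategy needed when the "generic" $b$ fails and one must search a longer range or iterate a secondary argument whose cost is governed by the same sieve parameters; one takes the maximum of the two regimes. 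The main obstacle I expect is not the algorithmic part but the exceptional-set estimate: one has to bound simultaneously the $n$ with large least nonresidue $N_n$ and the $n$ all of whose prime factors $p$ make $\#E_b(\F_p)$ have the same $2$-adic valuation for every admissible $b$, and to push the resulting bound below $N(\log N)^{-1/4}$ one needs the precise interplay between $B = (\log N)^\gamma$ and character-sum/large-sieve inputs — this is where the curious upper bound on $\gamma$ is forced, and getting the constants to line up is the delicate step.
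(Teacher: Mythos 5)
Your high-level outline — oracle call returns a multiple of $\prod_{p\mid n}\#E_b(\F_p)$; exploit a 2-adic discrepancy between primes $p\mid n$; convert a failed addition law into a $\gcd$ with $n$; control the exceptional set by bounding how often the least $b$ with $\left(\frac{b}{n}\right)=-1$ exceeds $B$ — is the right skeleton, and it is indeed what the paper does. But the central mechanism as you state it is wrong, and several of the numerology explanations don't match what actually produces the constants.

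The paper first restricts (by trial division up to $z=(\log N)^\beta$ and the sieve estimate of Lemma~\ref{12345}) to square-free $n$ divisible by two primes $p,q\equiv 3\pmod 8$. For such primes $E_b$ is supersingular, so $\#E_b(\F_p)=\#E_b(\F_q)=p+1$ and $q+1$, both $\equiv 4\pmod 8$; the \emph{2-adic valuations of the group orders are the same} whether $b$ is a residue or not. What differs, by Lemma~\ref{order}, is the \emph{group structure} (cyclic $\Z_{p+1}$ vs.\ containing $\Z_2\times\Z_2$), and hence the maximal 2-adic order of a \emph{point}. Your claim that "the $2$-adic valuations of $\#E_b(\F_{p_i})$ and $\#E_b(\F_{p_j})$ differ" is false in the relevant regime, and a proof built on it would not close: you would have no way to force the addition law to fail. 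The paper fixes this by the halving criterion (Lemma~\ref{kn}) and an explicit point $P=(\alpha x,\alpha^2 y)$ on the quadratic twist $E_{b\alpha^2}$ with $\alpha=x(x^2-b)/y^2$, chosen so that $P$ has maximal 2-order $4$ on the $p$-side (where $b$ is a nonresidue) but order at most $2$ on the $q$-side; then double-and-add on $k_b E(n,b)P$ necessarily hits a zero-divisor denominator (Lemma~\ref{1234}). This point construction and the reduction to $p,q\equiv 3\pmod 8$ are essential, and your proposal has neither.

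Your account of the exponents is also off. The second term $\frac{9\gamma-17}{8(\gamma-2)}=\beta+1$ in $\rho(\gamma)$ is not an "alternative strategy when the generic $b$ fails" — it is simply the cost of the initial trial division to $z=(\log N)^\beta$. The constraint $\gamma<2+\frac{\sqrt{257}-15}{16}$ is not "the threshold making the main error term beat $(\log N)^{-\delta}$"; it is equivalent to $\beta>\gamma$, i.e.\ $8\gamma^2-17\gamma+1<0$, which is needed so that after trial division every surviving prime factor of $n$ exceeds $B$ and the oracle's coprimality hypothesis $\gcd(b,n)=1$ holds for all $b\le B$. And the bound $\delta<1/4$ falls out identically from the choice of $\beta$: with $S(t)\ll t^{1/(\gamma-1-\varepsilon)}$ from Baier's result (Lemma~\ref{ba}), the exceptional count is $N(z^2)^{-1+1/(\gamma-1)}=N(\log N)^{-1/4+o(1)}$, independent of $\gamma$. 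In short, the approach is in the right family but the proof as sketched would fail at the key step, and the reasons you give for the constants are not the actual ones.
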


We note that the proof of Theorem~\ref{thm:factoring} is constructive and we actually exhibit a relevant algorithm in Section~\ref{sec:Alg},
see Algorithm~\ref{alg:AlgA}.
Clearly the limit of the method  in term of the number of queries is attained for $\gamma \to 2$. 
On the other hand, assuming that the queriescost only the reading time of the returns, 
that is, $O(\log N)$, one can choose $\gamma$ from the equation 
$$
\gamma +2 = \frac{9\gamma-17}{8(\gamma-2)}
$$
to minimise $\rho(\gamma)$, that is, 
$$
\gamma = \frac{9+\sqrt{561}}{16} = 2.0428 \ldots  \mand  \rho(\gamma) = 4.0428 \ldots \,.
$$ 

We can reduce the value of $\gamma>2 $ to $\gamma=1$ at the cost of a 
somewhat weaker estimate for exceptional numbers. 
In what follows we use a slightly modified \tMultEll$(N,B,M)$-oracle.

\begin{definition}[\tMultEll$(N,B,M)$-oracle] 
\label{def:MultEll-t}
Given the parameters $B$,  $N$ and $M$, for each $n \le N$, it 
returns  
\begin{itemize}
\item a positive multiple
$$k F(n,B) \le M, \qquad k\in \NN,
$$
of 
$$
F(n,B)=\prod_{b \le B} E(n,b),
$$ 
 if $n$ is square-free;
\item
an error message $\star\star\star$, if $n$ is not  square-free. 
\end{itemize}
\end{definition}

\begin{theorem}\label{thm:factoring2} 
There are some positive constants $c$ and $C$ such that if 
for a sufficiently large integer $N$ we are given a \tMultEll$(N,B,M)$-oracle, 
where 
$$
B = \fl{c \log N } \mand M=N^{O(B)}, 
$$
then there is a deterministic algorithm that finds a nontrivial factor of all  integers 
$n\le N$ with at most $ N\(\log N\)^{-C/\log\log\log N}$
exceptions, in polynomial, deterministic time $\log^{3+o(1)} N$,
with only one query of the \tMultEll$(N,B,M)$-oracle.  
\end{theorem}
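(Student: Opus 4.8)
The plan is to mimic the strategy behind Theorem~\ref{thm:factoring} but to extract all the arithmetic information from a \emph{single} oracle query, so that the number of arithmetic operations, rather than the number of queries, governs the running time. Given the \tMultEll$(N,B,M)$-oracle with $B=\fl{c\log N}$, one query returns a positive integer $L=kF(n,B)$ which is a multiple of $F(n,B)=\prod_{b\le B}\prod_{p\mid n}\#\(E_b(\F_p)\)$. The key point is that for a prime $p\mid n$ the Hasse bound gives $\#\(E_b(\F_p)\)=p+1-a_p(b)$ with $|a_p(b)|\le 2\sqrt p$, and as $b$ runs over a long enough interval the values $\#\(E_b(\F_p)\)\bmod p$ are well distributed; in fact $E_b:\,y^2=x(x^2-b)$ is a quadratic twist family, so $\#\(E_b(\F_p)\)$ depends only on the class of $b$ modulo squares in $\F_p$, and there are essentially two values, $p+1\pm a_p$, with $a_p$ the trace for the curve $y^2=x^3-x$. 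Thus $F(n,B)$ already encodes, modulo each $p\mid n$, a product of the shape $(p+1-a_p)^{u_p}(p+1+a_p)^{v_p}$ with $u_p+v_p\approx B$ and $u_p,v_p$ controlled by the quadratic character of the $b$'s.

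First I would reduce to the case that $n$ is square-free and composite with all prime factors exceeding, say, $(\log N)^{3}$: the oracle detects non-square-free $n$, prime $n$ can be recognised and discarded in polynomial time, and $n$ with a small prime factor is factored by trial division — each of these exceptional sets has the claimed density or smaller, and in fact $n$ with a prime factor below $(\log N)^{3}$ form a set of size $O\(N\log\log N/(\log N)^{3}\)$, comfortably inside the allowed bound. Second, with $L$ in hand, I would form $\gcd(L,n)$; more usefully, I would pick a random-looking but \emph{deterministic} auxiliary base point $P$ on some $E_b(\Z_n)$ and attempt to compute $[L]P$ using the addition formulas \eqref{addel}, watching for a zero divisor. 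The classical Lenstra-type observation is that if $L$ annihilates $P\bmod p$ but not $P\bmod q$ for two primes $p,q\mid n$, the computation of $[L]P$ breaks down in a way that exposes $\gcd$ with $n$. So the heart of the matter is a counting argument: for how many square-free $n\le N$ is it true that for \emph{every} choice of $b\le B$ (equivalently, for the single product $F(n,B)$) the integer $L$ acts the same way modulo all primes dividing $n$?

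The main obstacle, and the step I expect to absorb most of the work, is precisely that counting estimate. One wants to show that the "bad" $n$ — those for which $F(n,B)$, or any multiple of it of size $\le M=N^{O(B)}$, fails to separate the primes of $n$ — number at most $N(\log N)^{-C/\log\log\log N}$. The natural line is: $n$ is bad only if the multiplicative structure of $(p+1\pm a_p(1))$ across $p\mid n$ is anomalously "aligned", e.g. every $p\mid n$ satisfies $p\equiv 1$ modulo some common moderately large integer, or the orders of the chosen point modulo the various $p$ share a large common factor. One then bounds the number of $n\le N$ all of whose prime factors lie in a fixed residue class modulo $d$ by a sieve (Brun–Titchmarsh / Mertens in arithmetic progressions), summing over $d$ in a dyadic range dictated by $B=c\log N$ and $M=N^{O(B)}$; the triple-logarithm in the exponent is the familiar artefact of optimising $\sum_{p\mid n}$-type constraints against the number of $n$ with many prime factors (as in Erdős-type arguments on $\prod(p-1)$). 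The running time claim $\log^{3+o(1)}N$ is then routine: $[L]P$ is computed by repeated doubling, $\log_2 L=O(B\log N)=O(\log^2 N)$ group operations, each costing $O(\log^{1+o(1)}n)$ bit operations with fast arithmetic, for a total of $\log^{3+o(1)}N$, with the single oracle call adding nothing. I would isolate the counting bound as a separate lemma, prove it by the sieve argument sketched above, and then present the algorithm — square-free check, primality check, trial division on small primes, one oracle query, one scalar multiplication $[L]P$ with zero-divisor detection — as the short deterministic wrapper that the lemma certifies succeeds off the exceptional set.
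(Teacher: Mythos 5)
Your skeleton — one oracle query, Lenstra‑type scalar multiplication $[L]P$ with zero‑divisor detection, and a counting lemma bounding the ``bad'' $n$ — matches the paper, and your running‑time analysis ($\log_2 L = O(B\log N) = O(\log^2 N)$ group operations, each $\log^{1+o(1)}N$ bit operations, total $\log^{3+o(1)}N$) is exactly right. But the mechanism you sketch for \emph{why} the scalar multiplication splits $n$, and the counting argument you sketch for bounding the bad $n$, are both substantially off from what actually works.

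First, the twist picture ``there are essentially two values, $p+1\pm a_p$'' is wrong for this family: $E_b:\,y^2=x(x^2-b)$ has CM by $\Z[i]$, so for $p\equiv 3\pmod 4$ one has $a_p=0$ and $\#E_b(\F_p)=p+1$ for \emph{every} $b$ coprime to $p$. The cardinality does not distinguish twists at all. What changes with the quadratic character of $b$ is the \emph{group structure}: for $p\equiv 3\pmod 8$, $E_b(\F_p)$ is cyclic of order $p+1$ if $b$ is a nonresidue and contains $\Z_2\times\Z_2$ if $b$ is a residue (the paper's Lemma~\ref{order} together with Lemma~\ref{kn}). The split comes from choosing $b$ that is a nonresidue modulo one prime $p\mid n$ and a residue modulo another $q\mid n$, so that a suitably constructed point has $2$‑adic order $4$ mod $p$ but at most $2$ mod $q$; that mismatch is what makes the double‑and‑add on $kF(n,B)P$ hit a zero divisor. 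You would also need to specify the point: the paper sets $x=b$, $y=1$, $\alpha=x(x^2-b)\bmod n$, and takes $P=(\alpha x,\alpha^2 y)$ on the twist $E_{b\alpha^2}$ — the twist by $\alpha^2$ is what forces $P$ to actually lie on a curve over $\Z_n$ without a discrete‑log‑type search.

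Second, the counting lemma is not an Erd\H{o}s‑type / primes‑in‑progressions sieve over a modulus $d$. Once you know you need a $b\le B$ with $\(\frac{b}{pq}\)=-1$ for some pair of primes $p,q\equiv 3\pmod 8$ dividing $n$, the bad set splits into two parts. The first part — $n$ without two prime factors $\equiv 3\pmod 8$ — is handled by a Halberstam–Richert sieve (Lemma~\ref{lem:hr}, Lemma~\ref{12345}) and is $O(N(\log N)^{-\delta}\log\log N)$. The second part — $n$ for which the least Jacobi nonresidue $N_{pq}$ exceeds $B$ — is handled by importing the Lau–Wu theorem (Lemma~\ref{LW}): $N_m\ll\log m$ for all but $O(x^{1-C/\log\log x})$ square‑free $m\le x$. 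The triple logarithm in the final exceptional bound $N(\log N)^{-C/\log\log\log N}$ comes from plugging $S(t)\ll t^{1-C/\log\log t}$ into a divisor‑sum over $n=ms$ with $s=pq\ge z^2$, not from an internal optimisation over the number of prime factors of $n$; and the constant $c$ in $B=\fl{c\log N}$ is precisely the implied constant from Lau–Wu, which your sketch leaves unspecified. Without these two ingredients (the $2$‑adic group‑structure lemma and the least‑nonresidue input) the counting step you describe does not produce the claimed exponent.
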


\begin{rem} We note that we assume  that our oracles output results only for squarefree 
integers, which implicitly requires testing for squarefreeness (which in general is considered to be as hard as the integer factorisation problem). However, in our scenario
we can easily supplement our algorithms with a trial division testing for divisibility by 
integer squares $d^2$ with $d \le \log N$ and notice that there are only 
$O(N/\log N)$ non-square-free integers $n \le N$ that  pass this preselection, which 
is smaller than the sizes of exceptional sets in Theorems~\ref{thm:factoring} 
and~\ref{thm:factoring2}.  
\end{rem}

\subsection{Computational complexity of oracles   \MultEll\  and 
\tMultEll}  We note that the oracles make sense only for $M \gg N$. We also always have 
$$
B = (\log N)^{O(1)} \mand M \le N^{O(B)}.
$$ 
We use these inequalities when we  simplify
our estimates below.
 
Let us fix $b\le B$. We can find the factorisation of $n\le N$  in $N^{1/5+o(1)}$ deterministic time~\cite{Ha}.
Next we compute $E(p,b)$ for prime $p\mid n$ using the {\it Schoof algorithm\/} in $(\log N) ^{5+o(1)}$ deterministic time (see~\cite{BSS}).
Let $\omega(n)$ be the number of distinct prime factors of $n$.
We compute the product of $E(p,b)$ over primes $p\mid n$ in 
$$ \omega(n) (\log N )^{1+o(1)} \le  (\log N)^{2+o(1)}
$$  
deterministic time (see the end of Section~\ref{sec:concl}).
Similarly, we compute
a product $k_b E(n,b)\le M$, 
in deterministic time $(\log M)^{1+o(1)}$.
Hence
the total complexity is 
$$B(\log N)^{2+o(1)}+B(\log M)^{1+o(1)}+N^{1/5+o(1)}=N^{1/5+o(1)}.
$$
In case of  \tMultEll\ oracle we have additionally computation of $F(n,B)$ requiring  at most $O(B)$ multiplication of numbers of size $O\(B\log N\)$ giving the deterministic time $B (B\log N)^{1+o(1)}$.
So the total complexity of this oracle  is 
$$N^{1/5+o(1)}+B(B\log N)^{1+o(1)}= N^{1/5+o(1)}.
$$

\section{Preliminary results}

\subsection{Counting some special integers} 
\label{sec:sp int}

We need the following bound on the number of positive integers $n \in [x-z,x]$ which are 
free of prime divisors from some dense set of primes, see~\cite[Corollary~2.3.1]{hr}.

\begin{lemma} \label{lem:hr} 
Let  $x \ge z > 1$  and let $\cP$ be a set of primes such that for some positive constants $\delta$ and $A$ we have 
$$
\sum_{\substack{p < z \\p\in \cP}} \frac{1}{p} \ge \delta \log\log z -A. 
$$
Then we have
$$
\# \{n:~ x-z<n\le x, \ p\mid n \Longrightarrow p\notin \cP \} \ll \frac{z}{(\log z)^\delta}. 
$$ 
\end{lemma}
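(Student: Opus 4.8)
The plan is to deduce Lemma~\ref{lem:hr} from the standard sieve bound on integers in a short interval having no prime factor in a prescribed set, as given by~\cite[Corollary~2.3.1]{hr}. The quantity we wish to bound is exactly a sifting function: with sifting set of primes $\cP$ and sifting parameter $z = y$ (or $z$ slightly smaller, which is harmless), the count of $n \in (x-y, x]$ all of whose prime factors avoid $\cP$ is bounded by a main term of size $y$ times a product over $p \in \cP$, $p < z$, of $(1 - 1/p)$, plus an error term that is negligible once $y$ is large enough relative to $z$ (the usual remainder-term condition in the fundamental lemma of sieve theory, or in the Brun--Hooley form quoted in~\cite{hr}).

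The key step is then to convert the hypothesis on $\sum_{p \in \cP,\, p<z} 1/p$ into an upper bound for $\prod_{p \in \cP,\, p<z}(1 - 1/p)$. Writing the product as $\exp\bigl(\sum_{p \in \cP,\, p<z} \log(1 - 1/p)\bigr)$ and using $\log(1-1/p) \le -1/p$, one gets
$$
\prod_{\substack{p<z\\p\in\cP}}\left(1 - \frac{1}{p}\right) \le \exp\left(-\sum_{\substack{p<z\\p\in\cP}}\frac{1}{p}\right) \le \exp\bigl(-\delta\log\log z + A\bigr) = e^{A}(\log z)^{-\delta}.
$$
Feeding this into the sieve bound gives the count $\ll y (\log z)^{-\delta}$, and choosing $z = y$ (so that the sieve's remainder term is under control, which is where the constraint $x \ge y > 1$ and the implied constant's dependence on $\delta, A$ enter) yields $\ll y (\log y)^{-\delta}$, as claimed. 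I would also note the trivial point that for small $y$ the statement is vacuous after adjusting the implied constant, so one may assume $y$ is as large as needed.

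I expect the main obstacle to be purely bookkeeping rather than conceptual: one must check that the version of the short-interval sieve bound cited as~\cite[Corollary~2.3.1]{hr} is stated in a form whose error term is genuinely absorbed for the full range $x \ge y > 1$ — in particular that no lower bound on $y$ of the shape $y \ge z^{c}$ is needed beyond what the choice $z=y$ already provides, or, if such a restriction is present in the reference, that it is compatible with taking $z$ a small power of $y$ (replacing $\delta$ by $\delta' < \delta$ and then noting $(\log y)^{-\delta'} \ll (\log y)^{-\delta}$ is false in the wrong direction, so one instead keeps $z=y$ and relies on the fundamental-lemma form, which does allow $z$ up to a fixed power of $y$). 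A secondary point to verify is that restricting the product defining the main term to primes $p<z$ with $p \in \cP$ — rather than all $p<z$ — is exactly what the sieve delivers, so that the hypothesis on the partial sum of reciprocals is precisely the input needed; this is standard but worth stating explicitly so the reader sees why the density condition on $\cP$ is the right hypothesis.
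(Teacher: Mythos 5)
The paper does not prove this lemma at all: it is imported verbatim as~\cite[Corollary~2.3.1]{hr}, with no argument supplied in the text. So there is no ``paper proof'' to compare against. What you have written is essentially a derivation (or re-derivation) of the cited Halberstam--Richert corollary from the Brun-sieve/fundamental-lemma upper bound, and the two key steps you isolate are exactly the right ones: (i) an upper-bound sieve in a short interval gives $S(\cA,\cP,z)\ll y\prod_{p<z,\,p\in\cP}(1-1/p)$ once $z$ is a small fixed power of $y$ so that the $O(\sum_{d<D}1)$ remainder is dominated; and (ii) the hypothesis $\sum_{p<z,\,p\in\cP}1/p\ge\delta\log\log z - A$ combined with $\log(1-1/p)\le -1/p$ turns the product into $e^{A}(\log z)^{-\delta}$, and with $z=y^{1/s}$ for fixed $s$ this is $s^{\delta}e^{A}(\log y)^{-\delta}\ll (\log y)^{-\delta}$. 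Your parenthetical hand-wringing about whether one might have to degrade $\delta$ is unnecessary once you note, as you eventually do, that shrinking $z$ to a fixed power of $y$ only rescales the constant inside the logarithm, not the exponent $\delta$; the exponent survives intact. So the sketch is sound, and goes beyond what the paper itself supplies, which is a bare citation. If you wanted to be fully rigorous you would still have to open~\cite{hr} and check that Corollary~2.3.1 is stated exactly in the form you use (general sifting set $\cP$, linear remainders $|r_d|\le 1$), but conceptually there is no gap.
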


Covering  the interval $[1,N]$ by $O\(N/z\)$ intervals of the form $[x-z,x]$
we see that  Lemma~\ref{lem:hr}  implies the following. 

\begin{cor} \label{cor:hr} 
Let  $x \ge z > 1$  and let  $\cP$ be a set of primes such that for some positive constants $\delta$ and $A$ we have 
$$
\sum_{\substack{p < z \\p\in \cP}} \frac{1}{p} \ge \delta \log\log z-A. 
$$
Then we have
$$
\# \{n:~ 1 \le n\le x, \ p\mid n \Longrightarrow p\notin \cP \} \ll \frac{x}{(\log z)^\delta}. 
$$ 
\end{cor}

We denote by $\cN$ the set 
of all odd, positive, square-free integers and by $\cN(N)$ its restriction to the interval $[1,N]$. 
By $\cN(N,z)$ we denote the subset of $\cN(N)$ which consist of integers free of prime divisors $p \le z$.

\begin{lemma} \label{12345}
Let $z\in [\log N, \exp\((\log N)^\delta\) ]$ and let $\delta<1/4$ be an arbitrary positive constant. Then we have 
\begin{align*} 
\#\{n\in {\cN }(N,z):~ n=pqm, & \ p\neq q~\text{primes}, \ p,q \equiv 3 \mod 8, \ m \in \Z \}\\
& = \# {\cN }(N,z) + O\(N(\log N)^{-\delta} \log \log N\).
\end{align*} 
\end{lemma}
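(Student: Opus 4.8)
The plan is to bound the exceptional set
$$
\cE=\{n\in\cN(N,z):~n\text{ has at most one prime factor}\equiv 3\pmod 8\},
$$
since the set counted in the statement is precisely $\cN(N,z)\setminus\cE$ — a positive integer $n$ lies in it exactly when it has at least two distinct prime factors $\equiv 3\pmod 8$, with $m$ being the remaining part — so the asserted identity is equivalent to the estimate $\#\cE\ll N(\log N)^{-\delta}\log\log N$. I would split $\cE$ according to whether $n$ has zero or exactly one prime factor congruent to $3$ modulo $8$. Throughout I would invoke Lemma~\ref{lem:hr} with $\cP=\{p:~p\equiv 3\pmod 8\}$: by Mertens' theorem in arithmetic progressions $\sum_{p<w,\,p\in\cP}1/p=\tfrac14\log\log w+O(1)$, so the hypothesis of Lemma~\ref{lem:hr} holds with the constant $\delta$ there equal to $\tfrac14$, and the resulting exponent $1/4>\delta$ is the source of the slack in the claimed error term.

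If $n\in\cN(N,z)$ has no prime factor $\equiv 3\pmod 8$, then every prime factor of $n$ lies outside $\cP$, so Lemma~\ref{lem:hr} with $x=y=N$ bounds the number of such $n$ by $O\bigl(N(\log N)^{-1/4}\bigr)$. If $n$ has exactly one such prime factor, write $n=pm$ with $p\equiv 3\pmod 8$ the unique one; then $m$ is odd, square-free, free of primes $\le z$, and free of primes $\equiv 3\pmod 8$. Denoting by $\cM$ the set of such $m$, the number of these $n$ is at most $\sum_{z<p\le N,\,p\equiv 3(8)}\#\{m\in\cM:~m\le N/p\}$, and I would estimate this by splitting the $p$-range at $\sqrt N$ (permissible since $z\le\exp\bigl((\log N)^\delta\bigr)=o(\sqrt N)$).

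For $z<p\le\sqrt N$ one has $N/p\ge\sqrt N$, hence $\log(N/p)\gg\log N$; since the elements of $\cM$ avoid the primes of $\cP$, Lemma~\ref{lem:hr} gives $\#\{m\in\cM:~m\le N/p\}\ll (N/p)(\log N)^{-1/4}$, and summing against $\sum_{z<p\le\sqrt N}1/p\ll\log\log N$ contributes $O\bigl(N(\log N)^{-1/4}\log\log N\bigr)$. For $\sqrt N<p\le N$ one instead has $m=n/p<\sqrt N$, so I would reverse the order of summation and bound this part by $\sum_{m\in\cM,\,m<\sqrt N}\pi(N/m;8,3)$, where $\pi(x;8,3)$ denotes the number of primes $\le x$ congruent to $3$ modulo $8$. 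Here $N/m>\sqrt N$, so the Brun--Titchmarsh inequality gives $\pi(N/m;8,3)\ll (N/m)/\log N$; moreover, since the primes not congruent to $3$ modulo $8$ have relative density $3/4$, Mertens' theorem yields
$$
\sum_{m\in\cM,\,m<\sqrt N}\frac1m\le\prod_{z<p\le\sqrt N,\,p\not\equiv 3\,(\mathrm{mod}\,8)}\Bigl(1+\frac1p\Bigr)\ll(\log N)^{3/4},
$$
so this part contributes $O\bigl(N(\log N)^{-1/4}\bigr)$.

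Adding the three contributions gives $\#\cE\ll N(\log N)^{-1/4}\log\log N\ll N(\log N)^{-\delta}\log\log N$ since $\delta<1/4$, which proves the lemma. The main obstacle is the range of large $p$ in the second case: a plain sieve bound on $m$ alone (avoiding only the primes $\le z$) is too weak when $z$ is only of size $\log N$, and one must simultaneously exploit that the distinguished prime satisfies $p\equiv 3\pmod 8$ — so it is counted by $\pi(\cdot;8,3)$ rather than by $\pi(\cdot)$ — and that its cofactor $m$ misses the density-$3/4$ complement of that progression; the latter is exactly what produces the bound $(\log N)^{3/4}$ for $\sum 1/m$ and hence the overall saving $(\log N)^{-1/4}$.
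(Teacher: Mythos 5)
Your argument is correct, and structurally it parallels the paper's: both reduce the claim to bounding the number of $n\in\cN(N,z)$ with at most one prime factor $\equiv 3\pmod 8$, dispose of the zero-factor case directly via Lemma~\ref{lem:hr} (with $\cP=\{p:\,p\equiv 3\pmod 8\}$ and density $\tfrac14$), write $n=pm$ in the one-factor case with $m$ free of primes $\equiv 3\pmod 8$, and split the range of the distinguished prime $p$ at about $\sqrt N$. Where you genuinely diverge is in the range $p>\sqrt N$. The paper sticks with the sieve bound $\#\{m\le N/p\}\ll(N/p)(\log(N/p))^{-\delta}$ from Lemma~\ref{lem:hr} and estimates $\sum_{z<p\le N/z}(N/p)(\log(N/p))^{-\delta}$ directly by partitioning $(z,N/z]$ into the intervals $I_v=[N/e^{v+1},N/e^v)$ and applying Mertens on each; this is where their $\log\log N$ loss comes from. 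You instead reverse the order of summation, reducing to $\sum_{m\in\cM,\,m<\sqrt N}\pi(N/m;8,3)$, and combine Brun--Titchmarsh with the Mertens-type product bound $\sum_{m\in\cM,\,m<\sqrt N}1/m\ll(\log N)^{3/4}$, which gives the cleaner contribution $O\bigl(N(\log N)^{-1/4}\bigr)$ from that range; in your argument the $\log\log N$ factor arises instead from the range $p\le\sqrt N$ via the crude uniform bound $(\log(N/p))^{-1/4}\ll(\log N)^{-1/4}$. Both routes are valid. The paper's is more elementary (no Brun--Titchmarsh) and carries the parameter $\delta<1/4$ through the whole computation, while you work at the optimal density $\tfrac14$ from the start and hence prove the slightly stronger bound $O\bigl(N(\log N)^{-1/4}\log\log N\bigr)$. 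One small remark on your closing commentary: in the $p>\sqrt N$ range, replacing $\pi(\cdot;8,3)$ by $\pi(\cdot)$ would only cost a bounded factor, so the real saving there comes from the density-$\tfrac34$ restriction on $m$ (as you note), not from the congruence condition on $p$.
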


\begin{proof} 
Let $\#\cN_1(N,z)$ stand for the number of integers of $\cN(N,z)$ that are divisible by some prime $p\equiv 3 \mod 8$ and let $\#\cN_2(N,z)$ be the number of  $n \in \cN(N,z)$ that are divisible by at least two distinct primes $p,q\equiv 3 \mod 8$.

By the Prime Number Theorem in arithmetic progressions (see also~\cite{LaZa} and references therein), we have 
$$
\sum_{\substack{p < z \\p\equiv 3 \mod 8}} \frac{1}{p} =  \frac{1}{\varphi(8)}  \log\log z + O(1)
 =  \frac{1}{4} \log\log z + O(1).
$$ 
Now we see that  Corollary~\ref{cor:hr}  implies that 
 $$
 \#\cN_1(N,z)=\#\cN(N,z) +O\left((N/(\log N)^\delta\right) 
$$
 and therefore  
\begin{equation} \label{eq: N2 Asymp}
\begin{split} 
\#\cN_2(N,z)& =\#\cN_1(N,z) + O\left(E\right)\\
&=\#\cN(N,z) + O\left(E + N/(\log N)^\delta\right),
\end{split} 
\end{equation}  
where $E$ stands for an upper bound for the set  of the square-free numbers of type $n=pm\le N$, where $m$ is free of prime factors $q \equiv 3 \mod 8$.
Thus separating the cases $m=1$ and $m \neq1$ we obtain
\begin{equation} \label{eq: 3_8-free} 
\begin{split} 
E &  \ll N/(\log N)+
\sum_{z\le p\le N/z}\ \sideset{}{^*}\sum_{m \le N/p} 1 \\
& \ll N/(\log N)+\sum_{z<p\le N/z} \frac{N/p}{(\log (N/p))^\delta}, 
\end{split} 
\end{equation}
where $\Sigma^*$ means that the summation is over positive integers $m>1$ free of prime factors $q \equiv 3 \mod 8$.
Therefore it remains to prove that the last sum satisfies 
\begin{equation} \label{eq: goal} 
\sum_{z<p\le N/z} \frac{N/p}{(\log (N/p))^\delta} \ll \frac{N}{(\log N)^\delta}\log\log N.
\end{equation}

We split the range of summation in the sum in~\eqref{eq: goal}  into the intervals of type:
$$
p\in \cI_v=\left[\frac{N}{e^{v+1}}, \frac{N}{e^v}\right),
$$ 
where 
$$
\log z \le 
v < \log (N/z).
$$

Now, applying the {\it Mertens formula}, see, for example,~\cite[Equation~(2.15)]{IwKow}, 
$$
\sum_{p\le x}\frac{1}{p} =\log \log x +A+ O\((\log x)^{-1} \)
$$ 
for some constant $A$, we derive
$$
\sum_{z<p\le N/z} \frac{N/p}{(\log (N/p))^\delta}
\ll 
N\sum_{\log z\le v < \log(N/z)}v^{-\delta} \sum_{p\in \cI_v} \frac{1}{p}. 
$$
The inner sum can be estimated as 
\begin{align*}
\sum_{p\in \cI_v} \frac{1}{p}& = 
\log\log (N/e^v) +A +O\left(\frac{1}{\log (N/e^v)} \right)\\
& \qquad -\left(\log\log (N/e^{v+1}) +A+O\left(\frac{1}{\log (N/e^{v+1})} \right)\right)\\
& =\log \left( \frac{\log N(1-v/\log N)}{\log N(1-(v+1)/\log N)}\right)+O\left(\frac{1}{\log (N/e^{v+1})} 
\right) \\
&\ll \frac{1}{\log N-(v+1)}+
\frac{1}{\log (N/e^{v+1})} \ll \frac{1}{\log N-(v+1)}. 
\end{align*}

Therefore we have
\begin{align*}
\sum_{z<p\le N/z} \frac{N/p}{(\log (N/p))^\delta}
& \ll
N\sum_{\log z\le v\le \log(N/z)}v^{-\delta} \sum_{p\in \cI_v} \frac{1}{p}\\
& \ll N \sum_{\log z\le v\le \log(N/z)} 
\frac{v^{-\delta}}{\log N-v}. 
\end{align*}
We split the last sum into two sums depending on the size of $v$
and obtain
\begin{equation} \label{eq: S1S2}
\sum_{z<p\le N/z} \frac{N/p}{(\log (N/p))^\delta} \ll N(S_1 + S_2) , 
\end{equation}
where 
\begin{align*}
S_1 &= \sum_{\log z \le v\le 0.5 \log N} 
\frac{v^{-\delta}}{\log N-v}, \\
S_2 & = \sum_{0.5 \log N \le v\le \log(N/z)} \frac{v^{-\delta}}{\log N-v}. 
\end{align*}
We now estimate $S_1$ and $S_2$ separately.

For $S_1$ we have,
\begin{equation} \label{eq: Bound S1}
\begin{split}
S_1 & \ll \frac{1}{\log N} \sum_{\log z \le v\le 0.5 \log N} v^{-\delta} \le \frac{1}{\log N} \sum_{1\le v\le 0.5 \log N} v^{-\delta}\\
& \ll \frac{1}{\log N} (\log N)^{1-\delta} = (\log N)^{-\delta}. 
\end{split} 
\end{equation}

Furthermore, for $S_2$ we obtain
\begin{equation} \label{eq: Bound S2}
\begin{split}
S_2 & \ll \sum_{0.5 \log N \le v\le \log(N/z)} \frac{v^{-\delta}}{\log N-v} \\
& \ll 
(\log N)^{-\delta} \sum_{0.5 \log N \le v\le \log(N/z)} \frac{1}{\log N-v} \\
& \ll (\log N)^{-\delta} \sum_{\log z -1 \le u \le 0.5 \log N+1} \frac{1}{u} \ll (\log N)^{-\delta} \log \log N.
\end{split} 
\end{equation}
Substituting~\eqref{eq: Bound S1} and~\eqref{eq: Bound S2} in~\eqref{eq: S1S2} we obtain~\eqref{eq: goal},  which after substitution in~\eqref{eq: 3_8-free} yields
\begin{equation} \label{eq: Bound F1}
E \ll N (\log N)^{-\delta} \log \log N.
\end{equation}

Substituting the bound~\eqref{eq: Bound F1}  in~\eqref{eq: N2 Asymp}, we  complete  the proof.\end{proof} 

\begin{rem} We note that when $\delta$ grows and approaching $1/4$, the range  of $z$ in 
Lemma~\ref{12345} is getting broader while the error term is improving. However, the implied
constant in the error term depends on $\delta$ and tends to infinity  when $1/4 -\delta$
tends to zero.
\end{rem}

We note (although we do not need this for our argument) that  for  $z<N^{ 1/(10 \log\log N)}$
we have 
\begin{equation} \label{eq: N asymp}
 \#\cN(N,z) = e^{-\gamma} \frac{N}{\log z} +O\(N/(\log z)^2\),
\end{equation}
where $\gamma = 0.57721\ldots$ is the {\it Euler--Mascheroni constant\/}.
Indeed
 by~\cite[Part~I, Theorem~4.3]{Ten15},
 the set $\cR(N,z)$ of positive integers 
$n\le N$ with all prime factors $p>z$ is of cardinality 
$$
\# \cR(N,z) = N\prod_{p\le z}\left(1-\frac{1}{p}\right)\left(1+O\(\(\log  z\)^{-2} \) \right),
$$ 
provided $z<N^{ 1/(10 \log\log N)}$.
The non square-free numbers contribute at most 
$$
Q \ll \sum_{\substack{n=p^2 m \le N, \\ p>z}}1 \ll \sum_{p>z} \frac{N}{p^2}  
\ll N/z
$$ 
to $R$. 
Recalling the {\it Mertens formula\/}, see ~\cite[Part~I, Theorem~1.12]{Ten15}
$$
\prod_{p\le z}\left(1-\frac{1}{p}\right) = e^{-\gamma} \frac{1}{\log z} +O\(1/(\log z)^2\)
$$
and using  that $\#\cN(N,z)=R - Q$ we derive~\eqref{eq: N asymp}.

\subsection{Smallest non-residues of characters} 

We recall the notation $N_n$ from 
Section~\ref{sec:not}. 
The following results is a special case of~\cite[Theorem~1]{ba2}.
\begin{lemma} \label{ba} 
For arbitrary $\gamma> 2$ and $\varepsilon > 0$ we have 
$$
N_n \leq (\log x)^\gamma
$$ 
for all but $O\left(x^{1/(\gamma-1-\varepsilon)}\right)$
odd, square-free, positive integers $n\le x$.
\end{lemma}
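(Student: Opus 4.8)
The plan is to read off Lemma~\ref{ba} directly from~\cite[Theorem~1]{ba2}, which bounds, for an arbitrary threshold, the number of odd square-free $n\le x$ whose least Jacobi non-residue exceeds it. Specialising the threshold to $(\log x)^\gamma$ and simplifying the exponent in the resulting exceptional count down to $1/(\gamma-1-\varepsilon)$ gives precisely our statement; so the only thing to carry out is the bookkeeping of checking that $N_n$ in the sense of Section~\ref{sec:not} is the quantity estimated in~\cite{ba2}, and that its admissible parameter ranges cover all $\gamma>2$ and all sufficiently small $\varepsilon>0$.

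For orientation, the mechanism behind such a bound is as follows. The condition $N_n>y$ with $y=(\log x)^\gamma$ says exactly that $\left(\tfrac pn\right)=1$ for every prime $p\le y$ coprime to $n$; equivalently, the completely multiplicative Jacobi symbol $m\mapsto\left(\tfrac mn\right)$ agrees on $[1,y]$ with the principal character modulo $n$. The number of $n\le x$ with this property is estimated through a high even-power moment of the associated character sum over the family $\bigl\{\left(\tfrac{\cdot}{n}\right)\bigr\}_{n\le x}$: expanding the moment and sorting the terms by whether the product of the arguments is a perfect square, the square terms give a diagonal main term of size about $x$ times a combinatorial factor counting such tuples, while the remaining terms exhibit cancellation in sums $\sum_{n\le x}\left(\tfrac cn\right)$ handled by the P\'olya--Vinogradov inequality (or, for sharper outcomes, the large sieve for quadratic characters or Burgess's bound), the square-free restriction on $n$ being incorporated by a routine sieve. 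Optimising the moment order against these two inputs yields a power saving over the trivial bound.

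The only place where genuine work is required---and the reason one cites~\cite{ba2} rather than redoing this by hand---is the precise exponent. A crude execution using P\'olya--Vinogradov yields roughly $x^{1/2+o(1)}$ exceptions, uniformly in $\gamma$; this already suffices for $2<\gamma\le 3$, in particular throughout the range of Theorem~\ref{thm:factoring}, but falls short of $x^{1/(\gamma-1-\varepsilon)}$ once $\gamma>3$, where attaining the stated exponent needs the finer analysis of~\cite{ba2}: a more economical choice of the set of primes and of the moment order, stronger estimates for quadratic character sums whose modulus may exceed $x$, and careful control both of the combinatorial count of tuples with square product and of the imprimitivity of $\left(\tfrac{\cdot}{n}\right)$ modulo $n$ versus $4n$. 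None of this has to be reproduced for the present paper.
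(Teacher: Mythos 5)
Your proposal matches the paper's treatment, which does not prove Lemma~\ref{ba} at all but simply observes that it is a special case of~\cite[Theorem~1]{ba2}; you cite the same result and read the lemma off from it in the same way. The additional sketch you supply of the underlying mechanism (high even moments of the Jacobi-symbol family, diagonal versus off-diagonal terms, P\'olya--Vinogradov or the quadratic large sieve, then optimising the moment order) is accurate background and is consistent with how Baier obtains his exponent, but it is not part of the paper's argument and need not be reproduced. One small point worth keeping in mind: your observation that a crude $x^{1/2+o(1)}$ bound already suffices whenever $2<\gamma\le 3$ is correct, and since Theorem~\ref{thm:factoring} constrains $\gamma$ to the interval $\left(2,\,2+\tfrac{\sqrt{257}-15}{16}\right)$, which lies well inside $(2,3)$, the full strength of Baier's exponent is indeed not needed for the present application -- the citation is used because it is convenient and sharp, not because its full range is exploited.
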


We remark that the next result given by~\cite[Theorem~1.1]{LaWu} applies to more general settings of 
Kronecker symbols. We only need its part for square-free integers. Note that compared to Lemma~\ref{ba} 
it gives a stronger bound on $N_n$ but a weaker bound on the exceptional set. 

\begin{lemma}\label{LW}
There exists an absolute constant $C>0$ such that 
$$
N_n \ll \log n
$$ 
for all but $O(x^{1-C/\log\log x})$ value of 
$n\in \cN(x)$. 
\end{lemma}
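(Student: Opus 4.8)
The plan is to bound the exceptional set in an elementary way, by counting integers lying in a union of arithmetic progressions to a primorial modulus; for an exceptional‑set bound as modest as $x^{1-C/\log\log x}$ no character sums are needed. First I would record the standard fact that $N_n$ is always prime: if the least admissible $b$ factored as $b=uv$ with $1<u,v<b$, then $\gcd(uv,n)=1$ and $\left(\frac un\right)\left(\frac vn\right)=-1$, so one of $u,v$ would already be admissible, contradicting minimality. Then I would split $\cN(x)$ at $\sqrt x$: the integers $n\le\sqrt x$ number at most $\sqrt x=O(x^{1-C/\log\log x})$ (for $x$ large), while every $n>\sqrt x$ satisfies $2c_0\log n>c_0\log x=:y$, so that $N_n\le y$ already yields $N_n\ll\log n$. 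Hence it suffices to prove that $\#\{n\le x:\ n\ \text{odd},\ N_n>y\}\ll x^{1-C/\log\log x}$, where $c_0\in(0,\tfrac12)$ is a small absolute constant to be fixed.

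Next I would translate $N_n>y$ into congruence conditions. Since $N_n$ is prime, $N_n>y$ holds if and only if $\left(\frac pn\right)\ne-1$ for every prime $p\le y$ (a divisor $p\mid n$ gives $\left(\frac pn\right)=0\ne-1$, so this formulation automatically accommodates small prime factors of $n$). For $p=2$ and odd $n$ this means $n\equiv\pm1\pmod8$. For an odd prime $p$, reciprocity for the Jacobi symbol gives $\left(\frac pn\right)=\left(\frac np\right)(-1)^{\frac{p-1}{2}\cdot\frac{n-1}{2}}$, so once $n\bmod4$ is fixed the condition $\left(\frac pn\right)\ne-1$ is equivalent to $n\bmod p$ lying in a prescribed set of exactly $\frac{p+1}{2}$ residues (the $\frac{p-1}{2}$ residues of one value of the quadratic character mod $p$, together with $0$). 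Hence, writing $Q:=8\prod_{2<p\le y}p$, the admissible $n$ form a union of at most $A:=2\prod_{2<p\le y}\frac{p+1}{2}$ residue classes modulo $Q$: two admissible classes modulo $8$, and, for each of them, $\frac{p+1}{2}$ admissible residues modulo each odd prime $p\le y$ — the latter count being the same for both classes mod $8$, even though the actual residue sets differ.

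Finally I would insert the size estimates. By the prime number theorem $\log Q=\big(1+o(1)\big)y$, so $Q=x^{c_0+o(1)}\le x^{3/4}$ for $x$ large. The density satisfies
$$
\frac AQ=\frac14\prod_{2<p\le y}\frac{p+1}{2p}\ll 2^{-\pi(y)}\prod_{p\le y}\Big(1+\frac1p\Big)\ll 2^{-\pi(y)}\log\log x
$$
by Mertens' formula, while $\pi(y)=(1+o(1))\frac{y}{\log y}=(1+o(1))\frac{c_0\log x}{\log\log x}$, so that $2^{-\pi(y)}=x^{-(1+o(1))c_0\log 2/\log\log x}$. The number of $n\le x$ lying in a union of $A$ residue classes modulo $Q$ is at most $A\,(x/Q+1)\le(A/Q)x+Q$, and therefore
$$
\#\{n\le x:\ n\ \text{odd},\ N_n>y\}\ll x^{1-(1+o(1))c_0\log 2/\log\log x}+x^{3/4}\ll x^{1-C/\log\log x}
$$
for any fixed $C<c_0\log 2$ and $x$ large; together with the $O(\sqrt x)$ contribution this proves the lemma (e.g.\ with $c_0=\tfrac12$).

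I do not expect a genuine obstacle here: the argument is entirely elementary. The points needing care are the reciprocity bookkeeping — in particular verifying that the number of admissible residues modulo each odd prime $p\le y$ is $\frac{p+1}{2}$ regardless of $n\bmod8$ — and the constraint $Q<x$, which is exactly what forces $c_0$ (equivalently, the implied constant in ``$N_n\ll\log n$'') to be taken small; this is harmless, since the statement only claims that some absolute constant works. If one wanted the sharper implied constants of \cite{LaWu}, valid for general Kronecker symbols, one would replace the crude ``union of progressions'' count by a large‑sieve estimate for the sums $\sum_{n\le x}\left(\frac dn\right)$ arising when one expands $\prod_{p\le y}\tfrac12\big(1+\left(\tfrac pn\right)\big)$; but that refinement is not needed for Lemma~\ref{LW} as stated.
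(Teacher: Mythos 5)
Your argument is correct, and it is a genuinely different route from the paper's: the paper does not prove Lemma~\ref{LW} at all but simply cites it as a special case of Lau--Wu \cite{LaWu}, whose proof runs through large-sieve-type estimates for sums of Kronecker symbols. You instead give a self-contained elementary proof: use primality of $N_n$ to convert $N_n>y$ into the conjunction of $\bigl(\tfrac pn\bigr)\neq -1$ over primes $p\le y$, then reciprocity to turn this into a union of $A=2\prod_{2<p\le y}\tfrac{p+1}{2}$ residue classes to the primorial modulus $Q=8\prod_{2<p\le y}p$, and then a crude count $A(x/Q+1)\le (A/Q)x+Q$. The density $A/Q\asymp 2^{-\pi(y)}\log\log x$ and $\pi(y)\sim c_0\log x/\log\log x$ deliver exactly the exponent $1-C/\log\log x$. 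Your bookkeeping is sound: the residue $0\bmod p$ must be counted admissible (covering $p\mid n$), the sign in reciprocity depends only on $n\bmod 4$, which is pinned down by the two admissible classes mod~$8$, and the $\log\log x$ factor and the $+Q\le x^{3/4}$ term both get absorbed into the $o(1)$ in the exponent. One cosmetic inconsistency: you initially stipulate $c_0\in(0,\tfrac12)$ and then suggest $c_0=\tfrac12$; in fact any fixed $c_0\in(0,1)$ works, since all that is needed is $Q=x^{c_0+o(1)}<x^{1-\varepsilon}$, and the $\sqrt x$ split only required $c_0>0$. What the elementary argument forfeits compared with \cite{LaWu} is the quality of the implied constant in $N_n\ll\log n$ (here roughly $2c_0$, constrained by needing $Q\ll x$), and the extension to general Kronecker symbols and to the Burgess regime; but, as you note, none of that is needed for Lemma~\ref{LW} as used in Theorem~\ref{thm:factoring2}, so your proof is a perfectly adequate and more transparent substitute for the citation.
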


\subsection{Modular reductions of elliptic curves} 
\label{secmod red} 

We recall that an integer $a$, relatively prime to a prime $p$, is called a {\it quadratic non-residue modulo $n$\/} if the congruence 
$x^2 \equiv a \mod n$ has no solution in integers. 
The following result is due to Schoof~\cite[Lemma~4.8]{Schoof}. 

\begin{lemma}\label{order}
Let $E_b:~y^2=x(x^2 - b)$ be an elliptic curve over $\F_p$. If $p=3 \mod 4$, then $E_b(\F_p)$ 
has order $p+1$ and is cyclic or is isomorphic to  $\ZZ_2\times \ZZ_{(p+1)/2}$ according to whether $b$ is 
a quadratic non-residue or residue modulo $p$, respectively. 
\end{lemma}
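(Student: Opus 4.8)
\textbf{Proof plan for Lemma~\ref{order}.}
The plan is to work entirely over $\F_p$ with $p \equiv 3 \bmod 4$ and to exploit the explicit $2$-torsion of $E_b$. First I would compute the order: since $E_b$ has the form $y^2 = f(x)$ with $f(x) = x(x^2-b) = x^3 - bx$, it has the $2$-power-torsion structure governed by the roots of $f$. Because $p \equiv 3 \bmod 4$, the map $x \mapsto -x$ together with $y \mapsto iy$ is not available over $\F_p$ (as $-1$ is a non-residue), and one checks directly that $E_b$ admits the complex multiplication by $i$ only over $\F_{p^2}$; the standard consequence is that $E_b$ is supersingular when $p \equiv 3 \bmod 4$, hence $\#E_b(\F_p) = p+1$. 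Concretely, I would pair up $x$ with $-x$: for $x \ne 0$, $f(-x) = -f(x)$, and since $-1$ is a non-residue mod $p$, exactly one of $f(x), f(-x)$ is a nonzero square (or both are $0$), so each such pair $\{x,-x\}$ contributes exactly $2$ affine points; together with $x=0$ (one point $(0,0)$) and the point at infinity this gives $2 \cdot (p-1)/2 + 1 + 1 = p+1$, modulo the routine bookkeeping when $b$ itself is a square (so that $f$ has three rational roots).

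Next I would analyze the group structure via the $2$-torsion. The points of order dividing $2$ on $E_b$ are $\mathcal O$ together with $(e,0)$ for each root $e$ of $x^3 - bx = x(x^2-b)$. The root $x=0$ always lies in $\F_p$, while $x^2 = b$ has solutions in $\F_p$ iff $b$ is a quadratic residue mod $p$. Hence: if $b$ is a non-residue, $E_b(\F_p)[2]$ has order $2$, so the $2$-Sylow subgroup of $E_b(\F_p)$ is cyclic, and since $E_b(\F_p)$ is generated by its Sylow subgroups and the odd part is cyclic for any elliptic curve over a finite field with a cyclic $2$-part (more precisely, $E(\F_p) \cong \ZZ_{d_1} \times \ZZ_{d_2}$ with $d_1 \mid d_2$ and $d_1 \mid p-1$; here $d_1 = 1$ since $E_b(\F_p)[2]$ is not $\ZZ_2 \times \ZZ_2$), the group is cyclic. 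If instead $b$ is a quadratic residue mod $p$, then all three roots $0, \sqrt b, -\sqrt b$ lie in $\F_p$, so $E_b(\F_p)[2] \cong \ZZ_2 \times \ZZ_2 \subseteq E_b(\F_p)$, giving the claimed noncyclic subgroup.

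The main obstacle, and the only genuinely nontrivial point, is the order computation $\#E_b(\F_p) = p+1$, i.e. the supersingularity. I expect to handle it by the pairing argument sketched above rather than by invoking CM theory: writing $\#E_b(\F_p) = \sum_{x \in \F_p}\left(1 + \left(\tfrac{f(x)}{p}\right)\right) + 1$ with the Legendre symbol, and using that $f$ is odd together with $\left(\tfrac{-1}{p}\right) = -1$ to see that $\sum_{x}\left(\tfrac{f(x)}{p}\right) = 0$, since the nonzero values $f(x)$ and $f(-x) = -f(x)$ contribute canceling Legendre symbols and $f(0)=0$ contributes $0$. This gives $\#E_b(\F_p) = p + 1$ cleanly. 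The structural dichotomy then follows from the $2$-torsion count as above, with the divisibility constraint $d_1 \mid p-1$ (a standard fact about $E(\F_p)$, since $E[d_1] \subseteq E(\F_p)$ forces $\mu_{d_1} \subseteq \F_p$ via the Weil pairing) ruling out any larger obstruction to cyclicity in the non-residue case.
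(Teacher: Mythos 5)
Your proof is correct and follows essentially the same route as the paper's: both rely on the structure theorem $E_b(\F_p)\cong \ZZ_{d_1}\times\ZZ_{d_2}$ with $d_1\mid d_2$ and $d_1\mid p-1$ (via the Weil pairing), combine it with $d_1d_2=p+1$ to force $d_1\mid\gcd(p-1,p+1)=2$, and then read off $d_1\in\{1,2\}$ from the rational $2$-torsion, which is $\ZZ_2$ or $\ZZ_2\times\ZZ_2$ according to whether $b$ is a non-residue or a residue. The only genuine difference is that the paper simply asserts $\#E_b(\F_p)=p+1$ (a standard supersingularity fact for $y^2=x^3-bx$ when $p\equiv 3\bmod 4$), whereas you supply a short self-contained proof via the character sum $\sum_x\bigl(\tfrac{f(x)}{p}\bigr)=0$ using that $f$ is odd and $\bigl(\tfrac{-1}{p}\bigr)=-1$; that is a small but pleasant addition. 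One phrasing in your argument is misleading and worth fixing: the clause ``the odd part is cyclic for any elliptic curve over a finite field with a cyclic $2$-part'' is false in general (e.g.\ $\ZZ_3\times\ZZ_{12}$ has cyclic $2$-Sylow but noncyclic odd part); similarly, ``$d_1=1$ since $E_b(\F_p)[2]$ is not $\ZZ_2\times\ZZ_2$'' alone would not rule out, say, $d_1=3$. The correct chain, which you have all the ingredients for, is: $d_1\mid p-1$ and $d_1\mid d_1d_2=p+1$ give $d_1\mid 2$, and then the $2$-torsion count decides between $d_1=1$ and $d_1=2$. With that wording tightened, the proof is sound.
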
 


Note that the condition that $E_b:~y^2=x(x^2 - b)$ is an elliptic curve automatically 
excludes the value $b = 0$.

The following result is given in~\cite[Theorem~4.2]{kn}. 
\begin{lemma} \label{kn} 
Let $E$ be an elliptic curve over the field $\F_q$ of characteristic $\neq 2,3$, 
given by the equation 
$$
y^2 =(x-a)(x-b)(x-c) 
$$
with $a,b, c$ in $\F_q$. For $(x_2,y_2)$ in $E(\F_q)$, there exists $(x_1, y_1)$ in 
$E(\F_q)$ with $2(x_1,y_1)= (x_2,y_2)$ if and only if $x_2-a, x_2-b$ and $x_2-c$ are squares 
in $\F_q$. 
\end{lemma}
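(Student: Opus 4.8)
The plan is to prove this classical fact by recognizing the condition ``$x_2-a,x_2-b,x_2-c$ are squares'' as the triviality of the complete $2$-descent homomorphism at the given point; write the point of the statement as $(x_0,y_0)$. Put $f(x)=(x-a)(x-b)(x-c)$ and define
$$
\alpha\colon E(\F_q)\longrightarrow \F_q^\times/(\F_q^\times)^2\times \F_q^\times/(\F_q^\times)^2
$$
by $\alpha(\mathcal O)=(1,1)$, by $\alpha(x_0,y_0)=(x_0-a,\ x_0-b)$ for an affine point with $y_0\neq 0$, and by the standard special values $\alpha(a,0)=((a-b)(a-c),\ a-b)$ and $\alpha(b,0)=(b-a,\ (b-a)(b-c))$ at the points of order $2$. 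Since $\chara\F_q\neq 2$, the group $\F_q^\times/(\F_q^\times)^2$ has order $2$, so the target has order $4$.

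The first step is to verify that $\alpha$ is a group homomorphism. The only nontrivial input is the identity: if $(x_1,y_1),(x_2,y_2),(x_3,y_3)\in E(\F_q)$ lie on a common line $y=\lambda x+\nu$, then
$$
(x_1-a)(x_2-a)(x_3-a)=(\lambda a+\nu)^2,
$$
which holds because $x_1,x_2,x_3$ are exactly the roots of the monic cubic $f(x)-(\lambda x+\nu)^2$, so $f(x)-(\lambda x+\nu)^2=(x-x_1)(x-x_2)(x-x_3)$, and evaluating at $x=a$ with $f(a)=0$ gives the claim (likewise with $b$ and $c$). Hence each coordinate of $\alpha(P_1)\alpha(P_2)\alpha(P_3)$ is a square whenever $P_1+P_2+P_3=\mathcal O$, and since $\alpha(-P)=\alpha(P)$ this yields $\alpha(P_1+P_2)=\alpha(P_1)\alpha(P_2)$; the tangent, vertical-line and $2$-torsion configurations are absorbed by the special values, exactly as in the standard treatment of $2$-descent, see for instance~\cite[Chapter~X, Section~1]{Silv}.

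Granting this, the inclusion $2E(\F_q)\subseteq\ker\alpha$ is immediate from $\alpha(2Q)=\alpha(Q)^2=(1,1)$. The heart of the lemma, and the step I expect to be the main obstacle, is the reverse inclusion $\ker\alpha\subseteq 2E(\F_q)$: one has to show that if $x_0-a,\ x_0-b,\ x_0-c$ are all squares in $\F_q$ then $(x_0,y_0)$ is divisible by $2$ \emph{within $E(\F_q)$}, not merely over $\overline{\F}_q$. I would deduce this from the injectivity of the $2$-descent map: because $a,b,c\in\F_q$ all of $E[2]$ is $\F_q$-rational, hence a trivial Galois module isomorphic to $(\ZZ/2)^2$, and the Kummer sequence $0\to E[2]\to E\xrightarrow{[2]}E\to 0$ gives an injection $E(\F_q)/2E(\F_q)\hookrightarrow H^1(\operatorname{Gal}(\overline{\F}_q/\F_q),E[2])$, whose target is identified (via Kummer theory, using $-1\in\F_q$) with $\F_q^\times/(\F_q^\times)^2\times\F_q^\times/(\F_q^\times)^2$ so that the connecting homomorphism becomes $\alpha$; injectivity is then precisely $\ker\alpha=2E(\F_q)$. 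Alternatively, avoiding cohomology: $E(\F_q)$ is finite with $|E(\F_q)[2]|=4$, hence $|E(\F_q)/2E(\F_q)|=4$, the order of the target, so it suffices to produce, for a point with $x_0-a=r^2$, $x_0-b=s^2$, $x_0-c=t^2$ (signs chosen so that $rst=y_0$), an explicit halving point $Q\in E(\F_q)$ in closed form in $r,s,t$ and to verify $2Q=(x_0,y_0)$ from the doubling formula; this explicit verification is the only genuinely computational part.

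Finally I would assemble the equivalence. For an affine $(x_0,y_0)$ with $y_0\neq 0$ (the generic case of the statement), from $(x_0-a)(x_0-b)(x_0-c)=y_0^2\in(\F_q^\times)^2$ it follows that $x_0-a$ and $x_0-b$ being squares already forces $x_0-c$ to be a square, so $\alpha(x_0,y_0)=(1,1)$ if and only if all three of $x_0-a,x_0-b,x_0-c$ are squares in $\F_q$; together with $\ker\alpha=2E(\F_q)$ this is the asserted equivalence. When $y_0=0$, say $(x_0,y_0)=(a,0)$, the quantity $x_0-a=0$ is a square, so the stated condition reduces to ``$a-b$ and $a-c$ are squares'', which is exactly $\alpha(a,0)=(1,1)$, again equivalent to $(a,0)\in 2E(\F_q)$; the case of the point at infinity is trivial. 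This completes the plan.
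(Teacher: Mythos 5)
The paper does not prove this lemma; it is quoted verbatim as Theorem~4.2 of Knapp's \emph{Elliptic Curves} (reference~\cite{kn}), so there is no in-paper proof to compare against. Your plan nevertheless matches the spirit of what that source does: the statement is precisely the kernel computation for the complete $2$-descent map, and Knapp's argument is the explicit algebraic one --- essentially your ``alternative'' route carried to completion --- rather than the cohomological one.

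As a standalone argument, though, your write-up has a gap exactly at the step you yourself flag as the heart of the matter. You offer two routes to $\ker\alpha\subseteq 2E(\F_q)$ and complete neither. The cohomological route is sound in outline, but the one nontrivial fact it invokes --- that under the Kummer identification $H^1(\F_q,E[2])\cong \F_q^\times/(\F_q^\times)^2\times\F_q^\times/(\F_q^\times)^2$ the connecting homomorphism of the multiplication-by-$2$ sequence is the map $\alpha$ you wrote down --- carries essentially all the content of the lemma and is delegated to Silverman rather than shown. The explicit route is the one that would actually finish, but the computation is waved off as ``the only genuinely computational part.'' For the record, with $u=x_1-a$, $v=x_1-b$, $w=x_1-c$ the doubling formula collapses to the identity
$$
x_2-a=\left(\frac{vw-uw-uv}{2y_1}\right)^2,
$$
and symmetrically in $b$ and $c$; this already proves the ``only if'' direction and pins down the shape of a halving point, after which the ``if'' direction reduces to solving for $x_1\in\F_q$ given compatible square roots $r,s,t$ of $x_2-a$, $x_2-b$, $x_2-c$ with $rst=y_2$. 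Finally, the cardinality aside $|E(\F_q)/2E(\F_q)|=4$ is extraneous to the plan you sketch: exhibiting a halving point for each $P\in\ker\alpha$ settles the inclusion without it, and the count by itself does not single out $\ker\alpha$ among the index-$4$ subgroups of $E(\F_q)$.
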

Let $|P|$ denote the order of a finite point $P\in E(\Z_n)$. 
We also use $\nu_2(k)$ to denote the $2$-adic order of an integer $k \ge 1$. 

Before we formulate other results, let us make some preparations. 
If $e$ is a known, even exponent of a finite point $P\in E(\Z_n)$ such that the $2$-adic 
orders of $P_i$ 
and $P_j$ are distinct, that is, 
\begin{equation}\label{ord2} \nu_2\(|P_i|\)\neq \nu_2\(|P_j|\) \text{ for some } i \neq j, \end{equation} 
then we can use the above formulas~\eqref{addel} to find a nontrivial divisor of $n$.
To see that let us assume that 
$P\in E(\Z_n)$ is finite (otherwise $\gcd(z_P,n)$ is a divisor of $n$, where $P=(x_P:y_P:z_P)$). 

We recall that the classical  {\it double-and-add algorithm\/}, that is, iterating the formulas
$$
eP = \begin{cases} 2\((e/2) P\), &\quad  \text{if $e$ is even},\\
  (e-1)P + P, & \quad \text{if $e$ is odd},
 \end{cases}
$$
one can compute the multiple  $eP$ of a point $P\in E(\Z_n)$
in $O(\log n)$ arithmetic operations in $\Z_n$.  

Similarly as in the Lenstra factorisation method~\cite{Len1}, computing $eP$ using the above double-and-add algorithm, we find a nontrivial divisor by computing the greatest common divisor of $n$ and the denominator of $\lambda$ above,
when formulas~\eqref{addel} fail for the first time.

Assume that $Q$ is a finite point and an input to a step which fails (denominator of $\lambda$ is not a unit), then we find a non-trivial factor of $n$. More precisely, if $\vp(Q)=(Q_1, \ldots, Q_s)$ and in 
the doubling step formulas~\eqref{addel} fail, then $2Q_i=O$ for some $i$, and it follows from~\eqref{ord2}
that $2Q_j\neq O$ for some $j$, thus $\gcd(n,y_Q)$ is a non-trivial divisor of $n$. 

If $Q$ is an input to the addition step,
then $Q= 2lP$ for some integer $l$ as an output of doubling. 
If the addition step $Q+P$ fails, then $2lP_i = \pm P_i$ for some $i$, but again from \eqref{ord2} it follows that 
$2lP_j \neq \pm P_j$, since otherwise $\nu_2(|P_j|)=\nu_2(|P_i|)=0$, which contradicts the assumption. Hence for some $j$ we obtain that $\gcd(n,x_Q- x_P)$ is a non-trivial divisor of $n$.
This shows that if $2$-adic local orders of some point $P$ of even order are distinct, then we can 
factor $n$ in $\(\log n\)^{2+o(1)}$ bit operations, see~\cite{vzGG} for a background on complexity of arithmetic operations.

\begin{lemma} \label{1234}
Let $n$ be a positive, odd, square-free integer, and let $ p,q\equiv 3 \mod 8$ 
be  prime divisors of $n$.  
Assume that  $b$ is a quadratic 
nonresidue modulo $p$ and a quadratic residue modulo $q$. 
Let $y$ be coprime to $n$  
and $x$ be a quadratic nonresidue modulo $p$ such that $x(x^2-b)$ is not divisible by $p$.
  Then for 
$$
\alpha \equiv x(x^2-b)/y^2 \mod n, 
$$ 
 computing the multiples $k_bE(n,b)P$  
with $k_bE(n,b) = n^{O(1)}$ of  a point  $P=(\alpha x,\alpha^2y)$ lying on the curve $E_{b\alpha^2}$ modulo $n$, 
we   recover a nontrivial divisor of $n$ in at most $\(\log n\)^{2+o(1)}$ bit operations. 
\end{lemma}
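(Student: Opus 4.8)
The plan is to reduce the problem to the setup engineered in the paragraph preceding the statement, namely to exhibiting a finite point of even order on an elliptic curve over $\Z_n$ whose local $2$-adic orders at two primes $p,q\mid n$ differ, and then invoke the double-and-add factoring argument already spelled out there. First I would verify that $P=(\alpha x,\alpha^2 y)$ genuinely lies on $E_{b\alpha^2}$ modulo $n$: substituting, $(\alpha^2 y)^2 = \alpha^4 y^2$ while $(\alpha x)\big((\alpha x)^2 - b\alpha^2\big) = \alpha^3 x(x^2-b)$, so the point equation holds precisely when $\alpha y^2 \equiv x(x^2-b)\pmod n$, which is exactly the definition $\alpha \equiv x(x^2-b)/y^2 \pmod n$ (legitimate since $\gcd(y,n)=1$). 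I should also check the curve is nonsingular over $\Z_n$, i.e. $\gcd\big(4(b\alpha^2)^3+27\cdot 0^2,\,n\big)$ — here the relevant discriminant is a unit because $b\alpha^2$ is coprime to $p$ and to $q$ by the hypothesis that $p\nmid x(x^2-b)$ together with $\gcd(y,n)=1$; if it is not a unit we have already found a factor, so we may assume it is.

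Next I would analyse the reductions $P \bmod p$ and $P \bmod q$. The curve $E_{b\alpha^2}$ over $\F_p$ is isomorphic (via $(x,y)\mapsto(\alpha^{-1}x,\alpha^{-2}y)$, a unit twist) to $E_b$ over $\F_p$, and under this isomorphism the image of $P\bmod p$ is $(x \bmod p,\, y\bmod p)$. Since $p\equiv 3\pmod 8$ we have $p\equiv 3\pmod 4$, so Lemma~\ref{order} applies: as $b$ is a quadratic nonresidue mod $p$, $E_b(\F_p)$ is cyclic of order $p+1$, and since $p\equiv 3\pmod 8$ gives $\nu_2(p+1)=1$, the $2$-part of this group is exactly $\Z_2$. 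The key local claim at $p$ is that the point $(x,y)$ — equivalently $P\bmod p$ — has even order; for this I would use Lemma~\ref{kn} in contrapositive form: writing $y^2 = x(x^2-b) = x(x-\sqrt b)(x+\sqrt b)$ over $\F_p$ (or over $\F_{p^2}$ if $b$ is a nonresidue, in which case one argues that $x$ being a nonresidue already forces non-divisibility by $2$), the point is divisible by $2$ iff $x$ and the other two factors are squares; since $x$ is assumed a quadratic nonresidue mod $p$, the point is not a double, hence in the cyclic group of order $p+1$ with $\nu_2(p+1)=1$ it must have $\nu_2(|P_p|)=1$.

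At $q$, since $b$ is a quadratic residue modulo $q$ and $q\equiv 3\pmod 8$, Lemma~\ref{order} says $E_b(\F_q)$ contains $\Z_2\times\Z_2$; with $\nu_2(q+1)=1$ the full $2$-Sylow is exactly $\Z_2\times\Z_2$, and every element of $E_b(\F_q)$ has $2$-adic order $0$ or $1$. What I want is $\nu_2(|P_q|)\neq \nu_2(|P_p|)=1$, i.e. that $P\bmod q$ has odd order — but this is not forced by the hypotheses as stated, so here is where the argument needs care: the point is that the oracle supplies $e:=k_b E(n,b)$, a multiple of $\#E_b(\F_p)\cdot\#E_b(\F_q)$; running the double-and-add computation of $eP$, either all intermediate additions succeed — impossible, since $eP$ would then be the identity $\cO$ on $E(\Z_n)$, which reduces to $\cO$ at every prime, yet starting from a point of order $2$ at $p$ and multiplying by $e$ (even) gives $\cO$ at $p$ while consistency with the genuine group law would be fine, so one instead argues that \emph{somewhere} the formulas~\eqref{addel} must fail because the local orders at $p$ and at some other prime $r\mid n$ where $P$ has \emph{odd} local order are incompatible during the halving/doubling chain — or the computation succeeds and we examine $\nu_2$ of the order of $P\bmod r$ for the remaining primes $r$. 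The clean route, matching the paragraph before the lemma, is: condition~\eqref{ord2} is satisfied as soon as there is \emph{one} prime $r\mid n$ with $\nu_2(|P_r|)\neq 1$; if no such prime existed then $\nu_2(|P_r|)=1$ for all $r\mid n$, but then the order of $P$ in $E(\Z_n)$ has $2$-adic valuation exactly $1$, whereas $e$ is even and a multiple of each $\#E_b(\F_r)$ — one then picks the largest power of $2$ dividing $e$ and the point $e' P$ with $e'=e/2^{\nu_2(e)}$ to force the doubling step to fail across primes with differing $\nu_2$.

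The main obstacle, therefore, is precisely this last point: turning the oracle output $e=k_b E(n,b)$ into a \emph{witness} that triggers the failure of~\eqref{addel}, and hence a gcd, in $(\log n)^{2+o(1)}$ bit operations. I would handle it by the standard device: compute $e = 2^t u$ with $u$ odd, form $Q_0 = uP$ by double-and-add (if any step fails we are done), then successively double $Q_0, Q_1=2Q_0,\dots$; since $2P_p$ has order dividing $1$... more precisely since $\nu_2(|P_p|)=1$, the sequence $\nu_2$ of the order drops to $0$ after one doubling at $p$ but the doublings at a prime $r$ with odd local order already produce $\cO$ immediately, so the very first doubling after $uP$ has $2(uP)_r = \cO$ for those $r$ but $2(uP)_p\neq\cO$ when $u$ is odd and $|P_p|=2$ — wait, $u$ odd times the order-$2$ point is the order-$2$ point, so $2(uP)_p=\cO$ too. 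Hence all local orders of $uP$ are $0$ or $1$ and become $\cO$ after at most one doubling; the failure is detected exactly when the doubling of a point that is $\cO$ at some primes but not others is attempted, and Lemma~\ref{12345} (used elsewhere in the paper) guarantees that for the relevant $n$ there are indeed two primes $p,q\equiv 3\pmod 8$, one with each residue behaviour of $b$, so~\eqref{ord2} holds and the preceding paragraph's analysis yields a nontrivial $\gcd(n,\cdot)$. The complexity bound $(\log n)^{2+o(1)}$ follows since the whole computation is $O(\log n)$ ring operations in $\Z_n$, each costing $(\log n)^{1+o(1)}$ bit operations, plus the $\gcd$.
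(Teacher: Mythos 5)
Your overall strategy — show that the local $2$-adic orders $\nu_2\big(|P\bmod p|\big)$ and $\nu_2\big(|P\bmod q|\big)$ differ and then invoke the paragraph preceding the lemma — is exactly the paper's strategy, and the verification that $P$ lies on $E_{b\alpha^2}$ is correct. However, the proposal has a fatal arithmetic slip that propagates through the whole argument: for $p\equiv 3\pmod 8$ we have $p+1\equiv 4\pmod 8$, so $\nu_2(p+1)=2$, not $1$. The same holds for $q$. Consequently the $2$-Sylow of the cyclic group $E_{b\alpha^2}(\F_p)$ (with $b\alpha^2$ a nonresidue) is $\Z_4$, and the $2$-Sylow of $E_{b\alpha^2}(\F_q)$ (with $b\alpha^2$ a residue, so $\Z_2\times\Z_2\hookrightarrow E_{b\alpha^2}(\F_q)$) is exactly $\Z_2\times\Z_2$ — an order-$4$ subgroup, which is incompatible with your stated $\nu_2(q+1)=1$. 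The point of the hypothesis $p,q\equiv 3\pmod 8$ is precisely to pin the full $2$-power in $p+1=q+1\pmod{2^\infty}$ at $4$, so that one side has a cyclic $\Z_4$ and hence a point of order $4$, while the other side has $\Z_2\times\Z_2$ and hence no point of order $4$.

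Because of this slip, you end up concluding $\nu_2(|P_p|)=1$ and $\nu_2(|P_q|)\in\{0,1\}$, and — as you yourself notice — this gives no guaranteed discrepancy; the long final paragraph trying to manufacture a discrepancy from the oracle output does not succeed and is not what is needed. The paper's proof instead shows $\nu_2(|P_p|)=2$ while $\nu_2(|P_q|)\le 1$: at $p$ one argues (via Lemmas~\ref{order} and~\ref{kn}) that $P\bmod p$ is not a double, hence generates the $\Z_4$-part of the cyclic group; at $q$, since $E_{b\alpha^2}(\F_q)$ simply has no element of order $4$, the bound $\nu_2(|P_q|)\le 1$ is automatic. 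This asymmetry, created by the residue/nonresidue behaviour of $b$ at $p$ versus $q$, is the engine of the lemma and is entirely absent from your version.

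A secondary error: the map $(x,y)\mapsto(\alpha^{-1}x,\alpha^{-2}y)$ does not give an isomorphism $E_{b\alpha^2}\xrightarrow{\sim}E_b$ over $\F_p$. Substituting sends $y^2=x(x^2-b\alpha^2)$ to the quadratic twist $\alpha Y^2=X(X^2-b)$, which is isomorphic to $E_b$ only when $\alpha$ is a square modulo $p$; this is not assumed. The paper avoids the issue entirely by working directly with $E_{b\alpha^2}$ and the explicit point $P$. You would also need to justify the divisibility-by-$2$ criterion over $\F_p$ when the cubic does not split there — Lemma~\ref{kn} is stated over a field where the roots are rational, so one must descend the criterion from $\F_{p^2}$ to $\F_p$ (for this curve it reduces to asking whether the $x$-coordinate $\alpha x$ is a square in $\F_p$), which your write-up gestures at but does not carry out.

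In short: the plan is the right one, but $\nu_2(p+1)=2$ (not $1$) is the crucial numerical input, the $\Z_4$-versus-$\Z_2\times\Z_2$ dichotomy is the actual mechanism, and without these the proposal does not establish condition~\eqref{ord2}, so it does not prove the lemma.
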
 

\begin{proof}  First, note that any point  $P=(\alpha x,\alpha^2y)$ 
on 
$$E_{b\alpha^2}:~y^2=x(x^2-b\alpha^2)
$$ 
has order $2^ak \mod pq$, where $k$ is odd and $a=0, 1$ or $2$, by the congruence conditions for $p$ and $q$. 
Using assumptions on $b$, we see that 
$E_{b\alpha^2}(\F_p)$ contains a point of order $4$, while $E_{b\alpha^2}(\F_q)$ 
contains no point of order $4$. Now, by Lemmas~\ref{order} and~\ref{kn},    $P$ has 
maximal $2$-adic  order equal to $2^2$
on $E_{b\alpha^2}(\F_p)$, while it has $2$-adic order at most equal to $2^1$ on $E_{b\alpha^2}(\F_q)$. Therefore  
the preliminary  discussion before the formulation of Lemma~\ref{1234}  implies that 
$n$ can be split nontrivially in  $\(\log n\)^{2+o(1)}$  
 bit operations when computing doubling of the point $Q$ above.
This completes the proof. 
\end{proof}

\section{Proofs  of Main Results}

\subsection{Algorithm}
\label{sec:Alg} 
To prove Theorem~\ref{thm:factoring}, 
we now present an algorithm which finds a nontrivial factor of all but at most $O\(N/(\log N)^{\delta} \)$ integers $n\in \cN(N,z)$ , 
where  $z=(\log N)^\beta$ with 
 \begin{equation}\label{eq:beta}
\beta=\frac{\gamma-1}{8(\gamma-2)}
\end{equation} 
and $\delta$ is arbitrary positive constant 
with $\delta<1/4$,  in  deterministic polynomial time with a \MultEll$(N,B,M)$-oracle  given by Definition~\ref{def:MultEll}.

In the following algorithm we try to compute, using   the double-and-add algorithm, the  multiples $k_bE(n,b)P$ with 
$$k_bE(n,b)\ll k_b \prod_{p\mid n} (2p)\le k_b n^{1+o(1)},
$$    
obtained from  a   \MultEll$(N,B,M)$-oracle, of some points $P$ lying on the curve $E_{b\alpha^2}$ modulo $n$, 
see Section~\ref{secmod red}.

\begin{algor}[Factoring with a  \MultEll$(N,B,M)$-oracle]
\label{alg:AlgA} { } \qquad \\ 

\noindent
{\bf Input:} Square-free positive integer $n$, parameters $\gamma> 0$ and 
$B = \fl{(\log N)^\gamma}$
and
a \MultEll$(N,B,M)$-oracle. 

\smallskip
  
\noindent
{\bf Output:} Factorisation of $n$ or answer $n$ is exceptional. 

\begin{enumerate} 
\item Set $z =  (\log N)^\beta$, where $\beta$ is given by~\eqref{eq:beta}. 

\item Search for prime divisors $p\mid n$ with $p\le z$,
using trial division.   If a nontrivial divisor $p$ of $n$ is discovered then stop and return $p$.

\item For $b=1,2 \ldots B $, take $y=1$, $x=b$, $\alpha =x(x^2-b) \mod n$ and $P=(\alpha x, \alpha^2y)$. Next, if the \MultEll$(N,B,M)$-oracle  from Definition~\ref{def:MultEll} returns  
$\star\star\star$ we declare $n$ exceptional and terminate. 
Otherwise we use the product $k_b E(n,b)$  try to compute  $k_b E(n,b)P$ using the double-and-add algorithm.  
If during the computations a nontrivial divisor $d\mid n$ is discovered then stop and return $d$ as output. 
Otherwise output  $n$ as exceptional.   \end{enumerate} 
\end{algor}

Below, for each $N$,  we use Algorithm~\ref{alg:AlgA} with some specific parameters $B$ and $M$.

\subsection{Concluding of the proof of  Theorem~\ref{thm:factoring}}
For the proof we choose 
$$
z=(\log N)^{\frac{\gamma-1}{8(\gamma-2)}}
$$
where 
$$
2 < \gamma <2+\frac{\sqrt {257} -15}{16}
$$
and  fix some sufficiently small $\varepsilon>0$.

Note that the upper bound on $\gamma$ is chosen to guarantee 
 \begin{equation}\label{eq:beta>gamma}
\beta > \gamma
\end{equation} 
where $\beta$ is given by~\eqref{eq:beta}.

Clearly any positive  integer $n \le N$  with a prime divisor $p\le z$ is 
 factored during Step~(2) of Algorithm~\ref{alg:AlgA}. 
 
Clearly,  the  number of non-square-free integers for which  Step~(3)  applies and 
returns $\star\star\star$ is at most 
$$
\sum_{p \ge z} N/p^2  \ll N/z, 
$$
which give an admissible contribution to the exceptional set. 

Therefore, from this point on, we always 
assume $n \in \cN(N,z)$, where  $\cN(N,z)$ is  as in Section~\ref{sec:sp int}.

In view of Lemma~\ref{12345}, applied with say 
$\widetilde \delta = 1/8 +\delta/2$  instead of $\delta$, we see that for any $\delta < 1/4$ (thus $ \delta < \widetilde \delta  <1/4$)
all  numbers $n\in \cN(N,z)$ except at most the set $\cE_1$ of cardinality $\# \cE_1\ll N/(\log N)^\delta$ can be represented in the form $pqm$, where $p\neq q$ are primes such that $ p,q \equiv 3 \mod 8$. 
We  prove that among them there are at most $\#\cE_2<N(\log N)^{-2\gamma (1-\vartheta)}$ exceptions which are  be factored in 
Steps~(2) and~(3)
of Algorithm~\ref{alg:AlgA}, where $\vartheta=1/(\gamma -1) $.
In view of Lemma~\ref{1234} it is sufficient to prove that all  numbers $n\in \cN(N,z) \setminus\( \cE_1  \cup \cE_2\)$ are
  factored in Steps~(2) and~(3).
  
The set of exceptions $\cE_2$ 
consists of those numbers $n=pqm$ such that $ N_{pq}>B$. 
 Namely if $b\le B$ is such that $b$ is a quadratic nonresidue moduli $pq$ then  $b$ is a quadratic nonresidue modulo  exactly one $r\in \{p,q \}$
 one  and a quadratic residue modulo the complementary factor $pq/r$.  Without loosing the generality let us assume that $r=p$. Then 
$$
\alpha=x(x^2-b)=b(b^2-b),
$$
 where $x$ is a quadratic nonresidue modulo $p$ and moreover $b^2-b$ is not divisible by $p$ since $b$ is not a square modulo $p$. 
 Therefore, recalling~\eqref{eq:beta>gamma},  we obtain the 
 inequality $P^{-}(n)> z\ge B$, where $P^{-}(n)$ is  the smallest  prime divisor of $n$.  
 
We  see that the assumptions of Lemma~\ref{1234} are satisfied, since the discriminant  of the curve $E(b)$, that is $\Delta_{E(b)}=-4b^3$,   is coprime to $n$.
To complete the proof, let $S(t)$  be the counting function of 
odd, square-free positive integers $n\le t$ such that  $N_n> B(t)$,  
where 
 $$
 B(t) = \fl{(\log t)^\gamma}, 
 $$  
In particular,   by  Lemma~\ref{ba} we have
\begin{equation} \label{eq: Bound St}
S(t)\ll t^{\vartheta}
\end{equation}
where for any $\varepsilon > 0$ we can take $\vartheta=1/(\gamma-1-\varepsilon)$.  

Since $pq\ge z^2$ we estimate $\cE_2$
as follows:
\begin{align*}
\cE_2
& \ll\int_{z^2}^{N} \frac{N}{t} dS(t) =  S(N) - S(z^2) + N \int_{z^2}^{N} \frac{S(t)}{t^2} dt\\
&    \ll 
N \int_{z^2}^{N} t^{-2+\vartheta}dt\ll  N (z^2)^{-1+\vartheta }
 \ll N \(\log N\)^{-\delta},
 \end{align*}  
provided   $\varepsilon > 0$ is small enough, 
since  for $\gamma >2$ we have
$$ 
2\frac{\gamma-1}{8(\gamma-2)}\( 1-\frac{1}{\gamma-1}\)= 1/4>\delta.
$$  
In view of the preliminary discussion before the formulation of Lemma \ref{1234} we see that Step~(2) of the above algorithm takes   $\ll z \log ^{1+o(1)} N \ll (\log N)^{\beta+1+o(1) } =(\log N)^{1+\frac{\gamma-1}{8(\gamma-2)}+o(1)}$ bit operations, while Step~(3) takes $B(\log N)^{2+o(1)}=(\log N)^{\gamma+2+o(1)}$.
This  implies that  the complexity of  Algorithm~\ref{alg:AlgA}  is  $(\log N)^{\rho +o(1)}$, where
$$
\rho = \max\left \{\gamma+2, \beta+1 \right\}
= \max\left \{\gamma+2, \frac{9\gamma-17}{8(\gamma-2)} \right\}
$$  
with the  related number of exceptions   of order $O\(N/(\log N)^{\delta}\)$ for  any positive  $\delta<1/4$.

\subsection{Concluding of the proof of Theorem~\ref{thm:factoring2}}
\label{sec:concl} 

We apply a slight modification in Algorithm~\ref{alg:AlgA} 
where the parameters $z$ and $B$ are now chosen as
\begin{equation} \label{eq: z B}
z=B=\fl{c\log N}
\end{equation}
 where $c$ is the constant implied by the symbol $\ll$ in Lemma~\ref{LW}, 
and also 
$k_b$ is  replaced by $k, M=N^{O(B)}$,  and \MultEll$(N,B,M)$-oracle is replaced by
 \tMultEll$(N,B,M)$-oracle given by Definition~\ref{def:MultEll-t}.

Hence proceeding    as above 
 we estimate $\cE_2$ by Lemma~\ref{LW}, we have $S(t)\ll t^{1-C/\log\log t}$, 
 instead of~\eqref{eq: Bound St},  
 thus obtaining for 
 $$
 B(t) = \fl{c \log t }, 
 $$  
 where $c$ is chosen as in~\eqref{eq: z B}, 
 the following bound
\begin{align*} 
\cE_2 & \ll \sum_{1\le m\le N/z^2}\ \sum_{\substack{z^2< s< N/m\\ N_s>B(N/m)\\s~\text{square-free}}} 1 \\
& \ll \sum_{1\le m\le N/z^2}  \( \frac{N}{m}\)^{1-C_0/\log\log (N/m)}\(\log\frac{N}{m}\)^{-2}
\end{align*} 
for some $0<C_0<C$, since $u^{C/\log\log u}>\(\log u\) ^2 $ for sufficiently large $u> u_0$. Therefore letting 
$\delta(u)=C_0/\log\log u$ we obtain, considering the case $m=1$ and $m\in [z, N/z^2]$ separately,  
that
\begin{align*} 
\cE_2&  \ll N^{1-C_0/\log\log N}+
\int_ z^{N/z^2} \(\frac{N}{t}\)^{1-\delta(\frac{N}{t})} \(\log \frac{N}{t} \)^{-2}dt\\
&  \ll N^{1-C_0/\log\log N}+
N \int_ {z^2}^{N/z} u^{-1-\delta(u)}(\log u)^{-2} du\\
& \ll N^{1-C_0/\log\log N}+
N\max_{v\in [z^2, N/z]} v^{ -C_0/\log\log v}\int_{z }^{N/z^2}\frac{du}{u\log^2 u}\\
& \ll N (\log N)^{-C_0 /\log\log\log N}
\end{align*} 
when letting $z=\fl{c\log N}$.  

The complexity of algorithm follows by remarking that for the input data of order $N^{O(B)}$ the number of addition and doublings is $O\(B\log N \)$ while their  complexity is of order $\(\log N\)^{1+o(1)}$ giving (for $B = \fl{c \log N }$) 
altogether the bound $\(\log N\)^{3+o(1)}$ on the algorithm complexity bound, as required. 
The oracle is queried only once for  a multiple $kF(n,B) \le M$,
 where $M=N^{O(B)}$. 
 
 \section{Comments}
 
We remark that under the GRH, by the classical result of Ankeny~\cite{Ank}
for any $\gamma>2$ the exceptional sets of Lemma~\ref{ba},  and thus of  Theorem~\ref{thm:factoring}, is finite. 
We note that Ankeny~\cite{Ank} is interested in the smallest quadratic non-residue $b$ modulo $n$ 
(that is, the smallest $b$ for which $x^2 \equiv b \mod n$ has no solutions), but~\cite[Theorem~1]{Ank} 
in fact gives a bound on the larger quantity $N_n$. 
 On the other hand, it is not clear whether the 
GRH allows to improve   Lemma~\ref{LW} and Theorem~\ref{thm:factoring2}.

 \section*{Acknowledgement}

 The authors are very grateful to Drew Sutherland for very useful comments
 and to the referees for the very careful reading of the manuscript and helpful 
 suggestions. 
 
This work started during a very enjoyable visit by I.S. to the   Department of Mathematics 
of  the Warsaw University, whose support and hospitality are gratefully acknowledged.

This work was  supported   by ARC Grant~DP170100786.


\begin{thebibliography}{99} 

\bibitem{Ank}  N. C. Ankeny, 
\emph{The least quadratic non residue},
Annals Math. {\bf 55}  (1952),  65--72.

\bibitem{Bach}
E. Bach, \emph{Discrete logarithms and factoring}, Technical Report UCB/CSD 84/186, Univ. California--Berkeley, 1984,  
available at \url{ https://www2.eecs.berkeley.edu/Pubs/TechRpts/1984/5973.html}.

\bibitem{ba2} S. Baier, \emph{A remark on the least $n$ with $\chi(n)\neq 1$}, Arch. Math. 
{\bf 86} (2006), 67--72. 

\bibitem{BSS} I. Blake, G.  Seroussi and N. Smart, 
\emph{Elliptic curves in cryptography}, Cambridge University Press, Cambridge 1999. 

\bibitem{Copp1} D. Coppersmith, \emph{Small solutions to polynomial equations, and low exponent RSA
vulnerabilities},    J. Cryptology  {\bf 10} (1997), 233--260.


\bibitem{Copp2} D. Coppersmith, \emph{Small solutions of small degree  polynomials},
Lect. Notes in Comp.  Sci.,  vol.~ 2146,
Springer-Verlag,  Berlin, 2001, 20--31.

\bibitem{CrPom} R.~Crandall and C.~Pomerance, \emph{Prime numbers: A computational perspective\/}, 
 Springer-Verlag, New York, 2005.
 
 \bibitem{DiUr} L. Dieulefait and J.  Urroz,
 \emph{Factorization and malleability of RSA modules, and counting points on elliptic curves modulo $N$},  Mathematics {\bf 8} (2020), Art.~2126. 

 \bibitem{DP} R. Dry\l o and J. Pomyka\l a,
\emph{Integer factoring problem and elliptic curves over the ring $\mathbb Z_n$},
Colloq. Math. {\bf 159} (2020), 259-284.

\bibitem{vzGG} J. von zur Gathen and J. Gerhard,  \emph{Modern computer algebra\/}, Cambr. Univ. Press, Cambridge, 2003.

\bibitem{hr} H. Halberstam and H. E. Richert, \emph{Sieve methods}, Academic Press Inc., New York,  1974.

\bibitem{Ha} D. Harvey,
\emph{An exponent one-fifth algorithm for deterministic integer factorisation}, 
 Math. Comp.  {\bf 90} (2022), 2937--2950.
 
 \bibitem{HaHi}  D. Harvey and M.  Hittmeir,  
\emph{A $\log$-$\log$ speedup for exponent one-fifth deterministic integer factorisation}, Math. Comp. {\bf 91} (2022), 1367--1379.


 \bibitem{HavdH}  D. Harvey and  J. van der Hoeven, 
 \emph{Integer multiplication in time $O(n log n)$}, 
Ann. of Math. {\bf 193} (2021), 563--617.

\bibitem{Hitt}  M.  Hittmeir,  
\emph{A time-space tradeoff for Lehman's deterministic integer factorization method},  Math. Comp. {\bf 90} (2022), 1999--2010.

 

\bibitem{IwKow} H. Iwaniec and E. Kowalski,
\emph{Analytic number theory\/}, Amer.  Math.  Soc.,
Providence, RI, 2004. 


\bibitem{kn} J. W. Knapp,  
\emph{Elliptic curves},  
Princeton Univ. Press, Princeton, New Jersey, 1992.

\bibitem{LaZa} A. Languasco and A.  Zaccagnini,  
 \emph{A note on Mertens' formula for arithmetic progressions}, 
J. Number Theory {\bf 127} (2007),  37--46. 


\bibitem{LaWu} Y. K. Lau and J. Wu,  \emph{On the least quadratic nonresidue}, 
Int. J. Number Theory {\bf 4} (2008), 423--435.  

  
\bibitem{LeeVen} J. D. Lee and R. Venkatesan, 
\emph{Rigorous analysis of a randomised number field sieve}, 
J. Number Theory {\bf 187} (2018) 92--159.

\bibitem{Len2} H. W. Lenstra, 
\emph{Elliptic curves and number-theoretic algorithms}, 
Proc.  Intern. Congress of Math., Berkeley, 1986, Amer. Math. Soc., Providence,   99--120.

\bibitem{Len1} 
H. W. Lenstra, \emph{Factoring integers with elliptic curves}, Ann. of Math. {\bf 126} (1987),  649--673. 

\bibitem{LPZHL} Y. Lu,  L. Peng,  R.  Zhang,  L. Hu and D. Lin,  
\emph{Towards optimal bounds for implicit factorization problem}, 
Lect. Notes in Comp.  Sci.,    vol.~9566,
Springer-Verlag,  Berlin, 2016, 462--476.

\bibitem{LZL} Y. Lu,  R.  Zhang and   D. Lin,  
\emph{Improved bounds for the implicit factorization problem}, 
 Adv. Math. Commun. {\bf 7}  (2013),  243--251. 

\bibitem{MMV}S. Martin, P. Morillo and J. L. Villar,
\emph{ Computing the order of points on an elliptic curve modulo N is as difficult as factoring N},
Applied Mathematics Letters {\bf 14 }(2001), 341--346.

\bibitem{Maur} U. M. Maurer, \emph{On the oracle complexity of
factoring integers},   Comput. Compl.
{\bf 5} (1996),  237--247.

\bibitem{MayRit}  A. May and M Ritzenhofen,
\emph{Implicit factoring: On polynomial time factoring given only an implicit hint}, 
Lect. Notes in Comp.  Sci.,  vol.~5443,
Springer-Verlag,  Berlin, 2009, 1--14.


\bibitem{Mil}  G. L. Miller,  
\emph{Riemann's hypothesis and tests for primality}, J. Comp.  and Syst. Sci.   
{\bf 13} (1976),  300--317. 


\bibitem{NA} A. Nitaj and M. R. K. Ariffin, 
\emph{Implicit factorization of unbalanced RSA moduli}, 
 J. Appl. Math. Comput. {\bf 48} (2015),  349--363. 


\bibitem{Pol} J. M. Pollard, \emph{Theorems on factorization and primality testing}, 
Math. Proc. Cambr. Philos. Soc. {\bf 76} (1974), 521--528. 


\bibitem{PR} J. Pomyka\l a and M. Radziejewski,
\emph{Integer factoring and compositeness witnesses},
J. Math. Cryptol. {\bf 14}  (2020), 346-358.

\bibitem{Schoof}
R.~Schoof, \emph{Nonsingular plane cubic curves over finite fields},  J. of Combin.
 Theory  {\bf 46} (1987), 183--211.

\bibitem{Shoup} V. Shoup, \emph{A computational introduction to number theory and algebra},  
MIT, 2007. 

\bibitem{Silv} J. H. Silverman, \emph{The arithmetic of elliptic
curves}, 
Springer, Dordrecht, 2009. 

\bibitem{Suth} A. V. Sutherland, \emph{Order computations in generic groups}, PhD Thesis, MIT 2009., 
available at \url{https://dspace.mit.edu/handle/1721.1/38881}.  

\bibitem{Ten15} G. Tenenbaum,
\emph{Introduction to analytic and probabilistic number theory},
Grad. Studies in Math.,  vol.~163, AMS, 2015.




\end{thebibliography}
\end{document}